\newtheorem{theorem}{Theorem}[section]
\newtheorem{lemma}[theorem]{Lemma}
\newtheorem{corollary}[theorem]{Corollary}
\theoremstyle{definition}
\newtheorem{remark}[theorem]{Remark}
\numberwithin{equation}{section}
\author[G. Hu]{Guoen Hu}
\address{Guoen Hu: School of Applied Mathematics, Beijing Normal University, Zhuhai 519087,
P. R. China}
\email{huguoen@yahoo.com}
\thanks{The research of the first author was supported by NSFC (No. 11871108). The research of second author was supported by China Postdoctoral Science Foundation (Nos. 2017M621253, 2018T110279), National Natural Science Foundation of China (No. 11801118) and the Fundamental Research Funds for the Central Universities. The research of the third author was supported
partly by NSFC (Nos. 11471041, 11671039, 11871101) and NSFC-DFG (No. 11761131002).}
\author[X. Lai]{Xudong Lai}
\address{Xudong Lai: Institute for Advanced Study in Mathematics, Harbin Institute of Technology, Harbin, 150001, People's Republic of China}
\email{xudonglai@hit.edu.cn\ xudonglai@mail.bnu.edu.cn}
\author[Q. Xue]{Qingying Xue}
\address{Qingying Xue: School of Mathematics, Beijing Normal university, Beijing 100875, People's Republic of China}
\email{qyxue@bnu.edu.cn}
\thanks{Xudong Lai is the corresponding author.}
\keywords{Rough singular integral operator, composite operator, weighted bound, bilinear sparse operator}
\subjclass[2010]{Primary 42B20, Secondary 47B33}
\begin{document}

\title[compositions of rough operators]{On the composition of rough singular integral operators}

\begin{abstract}
In this paper, we investigate the behavior of the bounds of the composition for rough singular integral operators on the weighted space. More precisely,
we obtain the quantitative weighted bounds of the composite operator for two singular integral operators with rough homogeneous kernels on $L^p(\mathbb{R}^d,\,w)$, $p\in (1,\,\infty)$, which is smaller than the product of the quantitative weighted bounds for these two rough singular integral operators. Moreover, at the endpoint $p=1$, the $L\log L$ weighted weak type bound is also obtained, which has interests of its own in the theory of rough singular integral even in the unweighted case.
\end{abstract}
\maketitle
\section{Introduction}
This paper will be devoted to study the quantitative weighted bounds for the composition of rough singular integral operators. The theory of Calder\'on-Zygmund singular integral operator, which origins from the pioneering work of Calder\'on and Zygmund \cite{cz1} in 1950s, has been developed extensively in the last sixty years (see for example the recently exposition \cite{Gra249},\cite{Gra250}).

The composition of singular integral operators arise typically  in the algebra of singular integral (see \cite{CZ56},\cite{Cal67},\cite{Cal68}) and  the non-coercive
boundary-value  problems  for elliptic equations (see \cite{phst},\cite{NRSW15}). In the past decades, considerable attention has been paid to the composition of singular integral operators.
We refer the reader to see the work in \cite{str,phst,Chr88,ober,cana,NRSW15} and the references therein. This paper will be devoted to study the composition of the singular integral operator $T_{\Omega}$ with a rough convolution type kernel. Recall that
 $T_\Omega$ is defined by
\begin{eqnarray}\label{eq1.1}{T}_{\Omega}f(x)={\rm p.\,v.}\int_{\mathbb{R}^d} \frac
{\Omega(x-y)}{|x-y|^d}f(y)dy,\end{eqnarray}
where $\Omega$ is
homogeneous of degree zero, integrable and has mean value zero on
the unit sphere ${S}^{d-1}$. This operator was introduced by
Calder\'on and Zygmund \cite{cz1}, and then studied by many authors
in the last sixty years (see e.g. \cite {cz2}, \cite{co}, \cite{rw}, \cite{chr2}, \cite{gs}, \cite{fp}, \cite{se}).
The composite operator $T_{\Omega_1}T_{\Omega_2}$ has been first appeared in the work of  Calder\'on and Zygmund \cite{CZ56} where the algebra of singular integrals was studied.
However in this paper, we will study other properties
 of the composite operator $T_{\Omega_1}T_{\Omega_2}$.
Our starting points of this paper are as follows:
\begin{enumerate}[\quad (i).]
\item Calder\'on and Zygmund \cite {cz2} proved that $T_{\Omega}$ is bounded on $L^p(\mathbb{R}^d)$ if $p\in (1,\,\infty)$ for rough kernel $\Omega$. It is trivial to see that the composite operator $T_{\Omega_1}T_{\Omega_2}$ is bounded on $L^p(\mathbb{R}^d)$ for $p\in(1,\infty)$. At the endpoint $p=1$, it was quite later that Seeger \cite{se} showed $T_\Omega$ is of weak type (1,1) by means of  some deep idea of geometric microlocal decomposition and the Fourier transform. Nevertheless, no proper weak type estimate of $T_{\Omega_1}T_{\Omega_2}$ was known prior to this article when both $\Omega_1$ and $\Omega_2$ are rough kernels. In this paper, we will prove that $T_{\Omega_1}T_{\Omega_2}$ satisfies the $L\log L$ weak type estimate.
\item Recently there are numerous work related to seek the optimal quantitative weighted bound for singular integral operator (see e.g. \cite{ccdo,ler4,ler5,ler6,dhl,hlp,hrt,hp,bb,hu1,hu3}). Motivated by this, our interests are focused on the behavior of the quantitative weighted bound for $T_{\Omega_1}T_{\Omega_2}$ compared to that of single singular integral. We show that the quantitative weighted bound of $T_{\Omega_1}T_{\Omega_2}$ is smaller than the products of that of $T_{\Omega_1}$ and $T_{\Omega_2}$, which has interests of its own.
\end{enumerate}

We summary our main results as follows.

\begin{theorem}\label{thm1.1}
Let $\Omega_1$, $\Omega_2$ be homogeneous of degree zero, have mean value zero and $\Omega_1,\,\Omega_2\in L^{\infty}({S}^{d-1})$. Then for $p\in (1,\,\infty)$ and $w\in A_{p}(\mathbb{R}^d)$,
\begin{eqnarray*}
\|T_{\Omega_1}T_{\Omega_2}f\|_{L^p(\mathbb{R}^d,\,w)}&\lesssim &[w]_{A_p}^{\frac{1}{p}} \big([w]_{A_{\infty}}^{\frac{1}{p'}}+[\sigma]_{A_{\infty}}^{\frac{1}{p}}\big)\big([\sigma]_{A_{\infty}}+[w]_{A_{\infty}}\big)\\
&&\times \min\big\{[\sigma]_{A_{\infty}},\,[w]_{A_{\infty}}\}\|f\|_{L^{p}(\mathbb{R}^d,\,w)},
\end{eqnarray*}
where $p'=p/(p-1)$, $\sigma=w^{-1/(p-1)}$, and the precise definitions of $A_p(\mathbb{R}^d)$ weight and $A_p$ constants are listed in Section 2.
\end{theorem}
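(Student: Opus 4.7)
\medskip
\noindent\textbf{Proof proposal.} The plan is to avoid the obvious bound obtained by composing the sharp Hytönen–Roncal–Tapiola weighted estimate for each $T_{\Omega_i}$ separately, which would be quadratic in the $A_\infty$ constants, and instead to produce a \emph{single} bilinear sparse-form domination for the composite operator in which one of the rough singular integrals is absorbed into an improved averaged quantity over each cube. Concretely, I would prove a domination of the shape
\[
|\langle T_{\Omega_1}T_{\Omega_2}f,g\rangle|\;\lesssim\;\sup_{\mathcal S}\sum_{Q\in\mathcal S}|Q|\,\langle |f|\rangle_{1,Q}\,\langle |g|\rangle_{1,Q},
\]
upgraded as needed to a $(1,r)$- or $(s,1)$-type form with $r,s$ close to $1$, where $\mathcal S$ is a sparse family depending on $f$, $g$, $\Omega_1$, $\Omega_2$. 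Once a sparse form of this nature is in place, the stated weighted inequality follows from the sharp $A_p$–$A_\infty$ technology of Hytönen–Pérez for sparse operators, which explains the factor $[w]_{A_p}^{1/p}\bigl([w]_{A_\infty}^{1/p'}+[\sigma]_{A_\infty}^{1/p}\bigr)$, while the remaining factor $([\sigma]_{A_\infty}+[w]_{A_\infty})\min\{[\sigma]_{A_\infty},[w]_{A_\infty}\}$ is produced when one relaxes the sparse form by Hölder so as to absorb the averages that encode the second rough operator.

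The derivation of the sparse bound itself would proceed along the lines of the Lerner / Conde-Alonso–Culiuc–Di Plinio–Ou approach for a single $T_\Omega$, but adapted to the composition. First I would decompose each kernel à la Seeger into a sum of pieces supported on annular scales $\sim 2^k$, written using a Littlewood–Paley/microlocal smooth truncation, so that
\[
T_{\Omega_i}=\sum_{k\in\mathbb Z}T_{\Omega_i,k},\qquad i=1,2,
\]
and then organize the composition by diagonal and off-diagonal blocks according to the scale separation between $k_1$ and $k_2$. The diagonal block $\sum_k T_{\Omega_1,k}T_{\Omega_2,k}$ behaves like a single rough operator at scale $2^k$, so a Calderón–Zygmund/sparse recursion gives a sparse bound with only one factor of the $A_\infty$ penalty. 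The off-diagonal blocks admit rapid decay through the standard Cotlar / almost-orthogonality mechanism (either via Fourier decay of the smooth truncations or via cancellation of the mean-zero factor at the coarser scale), so they contribute a summable error that does not disrupt the sparse structure.

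The principal obstacle is the endpoint $L^1 \to L^{1,\infty}$ input required to run the Lerner stopping-time argument: for a single $T_\Omega$ one invokes Seeger's weak-type $(1,1)$ bound, but the composition $T_{\Omega_1}T_{\Omega_2}$ is \emph{not} weak $(1,1)$, only $L\log L$-to-$L^{1,\infty}$ (this being the companion result of the paper). Consequently the sparse stopping construction must be run with an $L\log L$ maximal operator in place of the usual Hardy–Littlewood maximal function, which introduces exactly one extra logarithmic average; this average is what produces the additional $\min\{[\sigma]_{A_\infty},[w]_{A_\infty}\}$ factor, via the classical equivalence $[\cdot]_{A_\infty}\sim$ norm of $M_{L\log L}$ on the appropriate weighted space. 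Verifying that the sparse stopping argument still closes with this weaker endpoint input, and that the $L\log L$-type penalty can be extracted dually to give the symmetric $\min$-factor in the bound, is the technical heart of the argument. After that, the weighted estimate is obtained by testing the resulting sparse form against $f$ and $\sigma g$ and balancing the $A_p$ and $A_\infty$ contributions in the standard way to arrive at the asserted inequality.
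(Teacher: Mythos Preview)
Your high-level plan---sparse domination for the composite operator, then the Hyt\"onen--P\'erez $A_p$--$A_\infty$ bounds for sparse forms, then a duality argument to obtain the $\min$ factor---is indeed the paper's strategy. But the route you sketch to the sparse bound is quite different from the paper's and has a real gap.

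The paper never performs a Littlewood--Paley diagonal/off-diagonal decomposition of $T_{\Omega_1}T_{\Omega_2}$. Your assertions that the diagonal blocks ``behave like a single rough operator at scale $2^k$'' and that the off-diagonal blocks ``admit rapid decay through the standard Cotlar/almost-orthogonality mechanism'' are not justified: the product of two rough pieces at the same scale is not convolution by a rough homogeneous kernel, and for $\Omega_i\in L^\infty(S^{d-1})$ the usual oscillation estimates give only polynomial decay in the scale separation, which is not obviously summable once each block is replaced by a sparse form carrying its own constant. What the paper actually does is run Lerner's grand-maximal-operator recursion directly on the composition (Theorem~\ref{dingli4.2} and Corollary~\ref{c41}), and the key structural point you miss is that this does \emph{not} yield a single sparse form. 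It decomposes $T_{\Omega_1}T_{\Omega_2}f=J_1+J_2$ with
\[
|\langle J_1,g\rangle|\lesssim r'\,\mathcal A_{\mathcal S;\,L\log L,\,L^r}(f,g),\qquad
|\langle J_2,g\rangle|\lesssim r'^{2}\,\mathcal A_{\mathcal S;\,L^1,\,L^r}(f,g),
\]
the split arising on each stopping cube $P$ from whether the intermediate function $T_{\Omega_2}f$ is localized to $3P$ (then use the $L^{r'}$-bound of $T_{\Omega_1}$ together with the weak-$(1,1)$ bound for $\mathscr M_{T_{\Omega_2},r'}$, producing the coefficient $r'^2$ but only $L^1$ averages) or to its complement (then use the $L\log L$ endpoint of $T_{\Omega_1}T_{\Omega_2}$ and of $\mathscr M_{T_{\Omega_1},r'}T_{\Omega_2}$, producing only $r'$ but $L\log L$ averages). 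Choosing $r'\sim[w]_{A_\infty}$ and estimating $\|f\|_{L\log L,Q}\lesssim[\sigma]_{A_\infty}\langle|f|\rangle_{1+\varepsilon_1,Q}$, the two pieces contribute $[w]_{A_\infty}^2$ and $[w]_{A_\infty}[\sigma]_{A_\infty}$ respectively, summing to $[w]_{A_\infty}\bigl([w]_{A_\infty}+[\sigma]_{A_\infty}\bigr)$; the $\min$ then follows by the duality step you anticipate. Without this two-piece decomposition and its precise distribution of the powers of $r'$, a single $(L\log L,L^r)$ sparse form with the natural coefficient $r'^2$ would yield a strictly worse bound than the one claimed.
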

\begin{remark}It is unknown whether the above quantitative weighted bound is optimal. However, from the recent result of  Hyt\"onen, Roncal, and  Tapiola \cite{hrt}: if $\Omega\in L^{\infty}({S}^{d-1})$, then for $p\in (1,\,\infty)$ and $w\in A_p(\mathbb{R}^d)$,
\begin{equation*}\|T_{\Omega}f\|_{L^p(\mathbb{R}^d,\,w)}\lesssim [w]_{A_p}^{\frac{1}{p}} \big([w]_{A_{\infty}}^{\frac{1}{p'}}+[\sigma]_{A_{\infty}}^{\frac{1}{p}}\big)\big([\sigma]_{A_{\infty}}+[w]_{A_{\infty}}\big)
\|f\|_{L^p(\mathbb{R}^d,\,w)},
\end{equation*}
in which the quantitative weighted bound was improved later by Li, P\'erez,  Rivera-Rios and Roncal \cite{lpr} as follows,
\begin{equation}\label{eq1.3} [w]_{A_p}^{\frac{1}{p}} \big([w]_{A_{\infty}}^{\frac{1}{p'}}+[\sigma]_{A_{\infty}}^{\frac{1}{p}}\big)\min\{[\sigma]_{A_{\infty}},\,[w]_{A_{\infty}}\},
\end{equation}
we can see that the quantitative weighted bound of $T_{\Omega_1}T_{\Omega_2}$ in Theorem \ref{thm1.1} is smaller than the product of the quantitative weighted bounds of $T_{\Omega_1}$ and $T_{\Omega_2}$ in (\ref{eq1.3}). In fact, for $p\in (1,\,\infty)$ and $w\in A_{p}(\mathbb{R}^d)$, by some elementary computation,
$$\max\{[w]_{A_{\infty}},\,[\sigma]_{A_{\infty}}\}\leq [w]_{A_p}^{\frac{1}{p}} \big([w]_{A_{\infty}}^{\frac{1}{p'}}+[\sigma]_{A_{\infty}}^{\frac{1}{p}}\big),$$
which  easily implies our desired estimate.
\end{remark}
\begin{theorem}\label{thm1.2}
Let $\Omega_1$, $\Omega_2$ be homogeneous of degree zero, have mean value zero and $\Omega_1,\,\Omega_2\in L^{\infty}({S}^{d-1})$. Then for $w\in A_1(\mathbb{R}^d)$ and $\lambda>0$,
\begin{eqnarray*}
&&w(\{x\in\mathbb{R}^d:\,|T_{\Omega_1}T_{\Omega_2}f(x)|>\lambda\})\\
&&\quad\quad\lesssim [w]_{A_1}[w]_{A_{\infty}}^2\log ({\rm e}+[w]_{A_{\infty}})\int_{\mathbb{R}^d}\frac{|f(x)|}{\lambda}\log\Big({\rm e}+
\frac{|f(x)|}{\lambda}\Big)w(x)dx.
\end{eqnarray*}
\end{theorem}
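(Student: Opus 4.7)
The plan is to establish the weighted weak $L\log L$ bound by combining a Calder\'on--Zygmund decomposition of $f$ at height $\lambda$ with Seeger's microlocal decomposition of each rough kernel $\Omega_i$, and then invoking Theorem \ref{thm1.1} to control the weighted $L^2$ norm of the resulting smoothed composite operator. By homogeneity take $\lambda=1$. Apply the standard CZ decomposition to write $f=g+b$ with $\|g\|_{\infty}\lesssim 1$ and $b=\sum_j b_j$, where each $b_j$ is supported in a cube $Q_j$, has zero mean, and satisfies $\int_{Q_j}|b_j|\lesssim |Q_j|$. Setting $E=\bigcup_j 8\sqrt{d}\,Q_j$, the $A_1$ condition gives $w(E)\lesssim [w]_{A_1}\|f\|_{L^1(w)}$, which is already absorbed by the target bound. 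It then suffices to estimate $w(\{|T_{\Omega_1}T_{\Omega_2}g|>1/4\})$ and $w(\{x\notin E:|T_{\Omega_1}T_{\Omega_2}b(x)|>1/4\})$ separately.

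For the good part, Theorem \ref{thm1.1} applied at $p=2$ yields an $L^2(w)$ bound $\|T_{\Omega_1}T_{\Omega_2}g\|_{L^2(w)}\lesssim \Phi(w)\|g\|_{L^2(w)}$. Using $[w]_{A_2}\leq [w]_{A_1}$, the $A_2$-symmetry $[w^{-1}]_{A_2}=[w]_{A_2}$ (which at $p=2$ gives $[\sigma]_{A_\infty}\lesssim [w]_{A_1}$), and $[w]_{A_\infty}\lesssim [w]_{A_1}$, the factor $\Phi(w)$ is majorized by a polynomial in $[w]_{A_1}$ of the form allowed by the statement. Chebyshev's inequality combined with $\|g\|_{L^2(w)}^2\leq \|g\|_{\infty}\|g\|_{L^1(w)}\lesssim \|f\|_{L^1(w)}$ then gives a bound on the good part that is dominated by the $L\log L$ right-hand side of Theorem \ref{thm1.2}.

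The main obstacle is the bad part: bounding $w(\{x\notin E:|T_{\Omega_1}T_{\Omega_2}b(x)|>1/4\})$. Because $T_{\Omega_2}b_j$ is not a standard CZ atom and the kernel of $T_{\Omega_1}$ does not satisfy a H\"ormander condition, the classical atomic argument collapses. The proposed remedy is to dyadically decompose each kernel $K_{\Omega_i}(y)=\Omega_i(y/|y|)|y|^{-d}$ into annular pieces $K_{\Omega_i,s}$ supported in $|y|\sim 2^s$, and further split each annular piece using Seeger's geometric microlocal decomposition into a smooth part and a highly oscillatory part. The composition of two smooth pieces is controlled by pointwise kernel estimates together with the cancellation $\int b_j=0$. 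The oscillatory pieces at scale $2^s$ enjoy polynomial $L^2$ decay in $s$ through a $TT^{\ast}$ orthogonality argument; this delivers a summable $L^2$ bound for the composite oscillatory error, with at most one logarithmic loss. The $L^2$ information is converted to the weighted $L\log L$ weak statement via an Orlicz-maximal-function argument using $M_{L\log L}$ and its sharp $A_\infty$ bound, which contributes the $[w]_{A_\infty}\log(\mathrm e+[w]_{A_\infty})$ factor in the final estimate.

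The hardest technical point is running both microlocal decompositions in parallel so that the composite oscillatory error retains only a single factor $\log(\mathrm e+[w]_{A_\infty})$ rather than its square, and only two powers of $[w]_{A_\infty}$ rather than three. This requires exploiting the fine orthogonality between the oscillatory pieces of $T_{\Omega_1}$ and $T_{\Omega_2}$ across scales simultaneously, rather than simply applying the single-operator Seeger estimate twice in succession.
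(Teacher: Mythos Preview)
Your approach has a genuine quantitative gap already at the good part, and the bad part is essentially unwritten.

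For the good part, applying Theorem \ref{thm1.1} at $p=2$ followed by Chebyshev yields a bound of the form $\Phi(w)^2\|g\|_{L^2(w)}^2$. At $p=2$ one has $\sigma=w^{-1}$, and while $[\sigma]_{A_\infty}\le[\sigma]_{A_2}=[w]_{A_2}\le[w]_{A_1}$, there is in general no control of $[\sigma]_{A_\infty}$ by $[w]_{A_\infty}$. Consequently $\Phi(w)$ is only bounded by a power of $[w]_{A_1}$, and after squaring you obtain at best $[w]_{A_1}^{4}[w]_{A_\infty}^{2}$ (plus an extra $[w]_{A_1}$ from the weighted bound $\|g\|_{L^1(w)}\lesssim[w]_{A_1}\|f\|_{L^1(w)}$ coming out of the Calder\'on--Zygmund averages). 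The statement of Theorem \ref{thm1.2}, however, demands the mixed constant $[w]_{A_1}[w]_{A_\infty}^{2}\log({\rm e}+[w]_{A_\infty})$, which separates the $A_1$ and $A_\infty$ dependence; since $[w]_{A_\infty}$ can be arbitrarily small compared to $[w]_{A_1}$, your bound is genuinely weaker, not just cosmetically different.

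For the bad part, ``running both microlocal decompositions in parallel'' with only a single logarithmic loss is precisely the heart of the matter, and you have not indicated how to achieve it. Seeger's argument for a single $T_\Omega$ already requires a delicate $TT^*$ estimate; composing two such operators while keeping track of weighted constants is a substantial new difficulty, and your proposal offers no mechanism for it.

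The paper proceeds along an entirely different route. It never performs a Calder\'on--Zygmund decomposition of $f$ or a Seeger-type kernel decomposition. Instead, via Theorem \ref{dingli4.2} (applied in Corollary \ref{c41}) it writes $T_{\Omega_1}T_{\Omega_2}f=J_1+J_2$ where $J_1$ enjoys an $(L\log L,L^r)$ bilinear sparse domination with bound $r'$ and $J_2$ an $(L^1,L^r)$ sparse domination with bound $r'^{2}$. Then Corollary \ref{cor1} converts each sparse bound directly into a weighted weak-type inequality with the correct mixed $A_1/A_\infty$ constants: $J_1$ contributes $[w]_{A_1}[w]_{A_\infty}\log^2({\rm e}+[w]_{A_\infty})$ against the $L\log L$ norm, and $J_2$ contributes $[w]_{A_1}[w]_{A_\infty}^{2}\log({\rm e}+[w]_{A_\infty})$ against the $L^1$ norm. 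The sparse machinery is what permits tracking $[w]_{A_1}$ and $[w]_{A_\infty}$ separately; your $L^2$-Chebyshev route collapses this distinction.
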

\begin{remark}
To the best knowledge of the author, the $L\log L$ weak type estimate in Theorem \ref{thm1.2} is new even in the unweighted case. We do not know whether this kind of $L\log L$ weak type estimate is optimal, but this estimate has no hope to be improved to the weak type (1,1) estimate even in the case $\Omega_1,\Omega_2\in C^\infty(S^{d-1})$. In fact, it was shown by Phong and Stein \cite{phst} that in general the composite operator $T_{\Omega_1}T_{\Omega_2}$ is not of weak type (1,1). More ever, the authors of \cite{phst} gave a necessary and sufficient condition such that the composite operator is of weak type (1,1). If $\Omega_1,\Omega_2\in C^\infty(S^{d-1})$, then by \cite[Proposition 2.4.8]{Gra249}, the symbols of $T_{\Omega_1}$ and  $T_{\Omega_2}$ (thus is $\mathcal{F}[p.v.\Omega_1(\cdot)/|\cdot|^d]$ and $\mathcal{F}[p.v.\Omega_2(\cdot)/|\cdot|^d]$, where $\mathcal{F}[f]$ denote the Fourier transform of $f$) are $C^\infty(\mathbb{R}^d\setminus\{0\})$. By check the necessary and sufficient condition in \cite[Theorem 1]{phst}, we may show that $T_{\Omega_1}T_{\Omega_2}$ is not of weak type (1,1).
\end{remark}

Previous results of quantitative weighted bounds for the composite operator is only known for the smooth singular integral operators, we refer to see \cite{bb},\cite{hu1} and \cite{hu3}. It should be pointed out that the argument for the smooth singular integral operators used in \cite{bb,hu1,hu3} essentially relies on the smooth condition of the kernel. Our strategy in this paper is to establish a decomposition of the composite operator by representing it as two operators which may have different kinds of bilinear sparse dominations: $(L(\log L)^{\beta},\,L^{r})$ and $(L^1,\,L^{r})$ type respectively (see Corollary \ref{c41}). This decomposition is done basing on the weak type estimates of the grand maximal operator $\mathscr{M}_{T_\Omega,r}$ and $T_\Omega$. In addition, we also show that the $(L(\log L)^{\beta},\,L^{r})$ type sparse domination could be applied to the operator that is of $(L(\log L)^{\beta}$ weak type to get quantitative weighted bounds. Our main arguments (see Sections 3 and 4) presented in this paper are stated in the abstract setting which have interest of its own. By applying them to the composite operator $T_{\Omega_1}T_{\Omega_2}$, we may get our main theorems.

This paper is organized as follows. In Section 2, we give some notation and lemmas. In Section 3, we will establish an quantitative weighted weak type estimate for the operator which enjoys a bilinear sparse domination. Section 4 is devoted to give a decomposition of the composite operator. Finally as applications of the arguments in Sections 3 and 4, the proof of our main theorems are given in Section 5.
\vskip0.24cm

\section{Preliminary}
In this paper, we will work on $\mathbb{R}^d$, $d\geq 2$. $C$ always denotes a
positive constant that is independent of the main parameters
involved but whose value may differ from line to line. We use the
symbol $A\lesssim B$ to denote that there exists a positive constant
$C$ such that $A\le CB$.  Specially, we use $A\lesssim_{d,p} B$ to denote that there exists a positive constant
$C$ depending only on $d,p$ such that $A\le CB$. Constant with subscript such as $c_1$, does not change in different occurrences.

For any set $E\subset\mathbb{R}^d$,
$\chi_E$ denotes its characteristic function.  For a cube
$Q\subset\mathbb{R}^d$ and $\lambda\in(0,\,\infty)$, we use $\ell(Q)$ (${\rm diam}Q$) to denote the side length (diameter) of $Q$, and
$\lambda Q$ to denote the cube with the same center as $Q$ and whose
side length is $\lambda$ times that of $Q$.  For a fixed cube $Q$, denote by $\mathcal{D}(Q)$ the set of dyadic cubes with respect to $Q$, that is, the cubes from $\mathcal{D}(Q)$ are formed by repeating subdivision of $Q$ and each of descendants into $2^d$ congruent subcubes.

For  $\beta\in [0,\,\infty)$,  cube $Q\subset \mathbb{R}^d$ and a suitable function $g$, $\|g\|_{L(\log L)^{\beta},\,Q}$ is the norm defined by
$$\|g\|_{L(\log L)^{\beta},\,Q}=\inf\Big\{\lambda>0:\,\frac{1}{|Q|}\int_{Q}\frac{|g(y)|}{\lambda}\log^{\beta}\Big({\rm e}+\frac{|g(y)|}{\lambda}\Big)dy\leq 1\Big\}.$$
$\langle |f|\rangle_{Q}$ denotes the mean value of $|f|$ on $Q$ and $\langle |g|\rangle_{Q, r}=\big(\langle|g|^r\rangle_{Q}\big)^{\frac{1}{r}}.$ We denote $\|g\|_{L(\log L)^{0},\,Q}$ by $\langle |g|\rangle_{Q}$.
Let $M_{\beta}$ be the maximal operator defined by
$$M_{\beta}f(x)=\big[M(|f|^{\beta})(x)\big]^{\frac{1}{\beta}},$$
where $M$ is the Hardy-Littlewood maximal operator, and $M_{L(\log L)^{\beta}}$ be the maximal operator defined by
 $$M_{L(\log L)^{\beta}}g(x)=\sup_{Q\ni x}\|g\|_{L(\log L)^{\beta},\,Q}.$$
For simplicity, we denote $M_{L(\log L)^{1}}$ by $M_{L\log L}$. It is well known that $M_{L(\log L)^{\beta}}$ is bounded on $L^p(\mathbb{R}^n)$ for all $p\in (1,\,\infty)$, and for any $\lambda>0$,
\begin{equation}\label{eq1.7}\big|\{x\in\mathbb{R}^d:\,M_{L(\log L)^{\beta}}g(x)>\lambda\}\big|\lesssim \int_{\mathbb{R}^d}\frac{|g(x)|}{\lambda}\log^{\beta} \Big({\rm e}+\frac{|g(x)|}{\lambda}\Big)dx.
\end{equation}

Let $w$ be a nonnegative, locally integrable function on $\mathbb{R}^d$. We say that   $w\in A_{p}(\mathbb{R}^d)$ if the $A_p$ constant $[w]_{A_p}$ is finite, where
$$[w]_{A_p}:=\sup_{Q}\Big(\frac{1}{|Q|}\int_Qw(x)dx\Big)\Big(\frac{1}{|Q|}\int_{Q}w^{-\frac{1}{p-1}}(x)dx\Big)^{p-1},\,\,\,p\in (1,\,\infty),$$
the  supremum is taken over all cubes in $\mathbb{R}^d$, and the $A_1$ constant is defined by
$$[w]_{A_1}:=\sup_{x\in\mathbb{R}^d}\frac{Mw(x)}{w(x)}.$$
A weight $u\in A_{\infty}(\mathbb{R}^d)=\cup_{p\geq 1}A_p(\mathbb{R}^d)$. We use the following definition of the $A_{\infty}$ constant of $u$ (see e.g. \cite{wil})
$$[u]_{A_{\infty}}=\sup_{Q\subset \mathbb{R}^d}\frac{1}{u(Q)}\int_{Q}M(u\chi_Q)(x)dx.$$

As usual, by a general dyadic grid $\mathscr{D}$,  we mean a collection of cubes with the following properties: (i) for any cube $Q\in \mathscr{D}$, its side length $\ell(Q)$ is of the form $2^k$ for some $k\in \mathbb{Z}$; (ii) for any cubes $Q_1,\,Q_2\in \mathscr{D}$, $Q_1\cap Q_2\in\{Q_1,\,Q_2,\,\emptyset\}$; (iii) for each $k\in \mathbb{Z}$, the cubes of side length $2^k$ form a partition of $\mathbb{R}^d$.

Let $\eta\in (0,\,1)$ and $\mathcal{S}=\{Q_j\}$ be a family of cubes. We say that $\mathcal{S}$ is $\eta$-sparse,  if for each fixed $Q\in \mathcal{S}$, there exists a measurable subset $E_Q\subset Q$, such that $|E_Q|\geq \eta|Q|$ and $E_{Q}$'s are pairwise disjoint.
Associated with  the sparse family $\mathcal{S}$  and constants $\beta\in[0,\,\infty)$ and $r\in [1,\,\infty)$, we define the bilinear sparse operator $\mathcal{A}_{\mathcal{S};\,L(\log L)^{\beta},\,L^{r}}$  by
$$\mathcal{A}_{\mathcal{S};\,L(\log L)^{\beta},L^{r}}(f,g)=\sum_{Q\in\mathcal{S}}|Q|\|f\|_{L(\log L)^{\beta},\,Q}\langle|g|\rangle_{Q,r}.$$
Also, we define  the operator $\mathcal{A}_{\mathcal{S},\,L^{r_1},\,L^{r_2}}$ by
$$\mathcal{A}_{\mathcal{S};\,L^{r_1},L^{r_2}}(f,g)=\sum_{Q\in\mathcal{S}}|Q|\langle|f|\rangle_{Q,r_1}\langle |g|\rangle_{Q,\,r_2}.$$

Let $T$ be a sublinear operator acting on $\cup_{p\geq 1}L^p(\mathbb{R}^d)$, $\beta,\,q\in (0,\,\infty)$. We say that $T$ enjoys a
$(L(\log L)^{\beta},\,L^{q})$-bilinear sparse domination with bound $A$, if for each  bounded  function $f$ with compact support, there exists a sparse family $\mathcal{S}$ of cubes, such that for all $g\in L_{{\rm loc}}^{q}(\mathbb{R}^d)$,
\begin{eqnarray}\label{gongshi3.1}\Big|\int_{\mathbb{R}^d}g(x)Tf(x)dx\Big|\le A \mathcal{A}_{\mathcal{S},\,L(\log L)^{\beta},\,L^{q}}(f,\,g).
\end{eqnarray}

We will use the following lemmas in our proof.
\begin{lemma}[see \cite{hp2}]\label{lem4.1}
Let $t\in (1,\,\infty)$. Then for $p\in (1,\,\infty)$ and weight $w$,
$$\|Mf\|_{L^{p'}(\mathbb{R}^d,\,(M_tw)^{1-p'})}\le c_dpt'^{\frac{1}{p'}}\|f\|_{L^{p'}(\mathbb{R}^d,\,w^{1-p'})}.$$
\end{lemma}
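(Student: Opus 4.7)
The plan is to reduce the claimed two-weight bound to a one-weight inequality for $M$ on $L^{p'}((M_tw)^{1-p'})$, and then to invoke a sharp weighted maximal estimate that leverages the fact that $M_tw$ is an $A_1$ weight with quantitative control in $t'$.

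\textbf{Step 1 (reduction to a single weight).} The first observation is that $M_tw=(M(w^t))^{1/t}\ge w$ pointwise, since $M(w^t)(x)\ge w(x)^t$ for a.e.\ $x$ by Lebesgue differentiation. Because $1-p'<0$, this gives $(M_tw)^{1-p'}\le w^{1-p'}$ and hence
$$\|f\|_{L^{p'}((M_tw)^{1-p'})}\le\|f\|_{L^{p'}(w^{1-p'})}.$$
Thus it suffices to show $\|Mf\|_{L^{p'}(v)}\le c_d p(t')^{1/p'}\|f\|_{L^{p'}(v)}$ with $v:=(M_tw)^{1-p'}$.

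\textbf{Step 2 (quantitative weight estimates).} Writing $M_tw=(Mg)^{\delta}$ with $g=w^t$ and $\delta=1/t\in(0,1)$, the Coifman--Rochberg theorem gives
$$[M_tw]_{A_1}\lesssim \frac{1}{1-\delta}=t'.$$
A routine essinf computation then shows $[v]_{A_{p'}}\le [M_tw]_{A_1}^{p'-1}\lesssim (t')^{p'-1}$. Moreover, the $A_{p'}$-companion of $v$ is $v^{1-p}=M_tw$, so we also have the $A_\infty$ control $[M_tw]_{A_\infty}\le [M_tw]_{A_1}\lesssim t'$.

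\textbf{Step 3 (sharp maximal bound on $L^{p'}(v)$).} The remaining task is to establish the one-weight inequality for $M$ on $L^{p'}(v)$ with constant $c_d p(t')^{1/p'}$. I would approach this via Marcinkiewicz interpolation, pairing the trivial strong-type $(\infty,\infty)$ bound with an appropriate weighted weak-type endpoint estimate whose constant scales linearly in $[M_tw]_{A_\infty}\lesssim t'$. The Marcinkiewicz interpolation exponent at $L^{p'}$ is $1/p'$, which extracts precisely the factor $(t')^{1/p'}$ from the weak-type constant, while the $L^1$-endpoint contribution supplies the prefactor of order $p=(p')'$.

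The main obstacle is obtaining the sharp power $(t')^{1/p'}$ rather than $t'$: a naive combination of Step 2 with Buckley's sharp $A_{p'}$ theorem, $\|M\|_{L^{p'}(v)}\lesssim p\,[v]_{A_{p'}}^{1/(p'-1)}$, yields only the weaker estimate $c_d p\,t'$, since the Buckley exponent $1/(p'-1)=p-1$ fully absorbs the $(p'-1)$-st power in $[v]_{A_{p'}}\lesssim (t')^{p'-1}$. The refinement to $(t')^{1/p'}$ demands that one routes the $t'$-dependence through the interpolation exponent $1/p'$ (i.e., through the sharp $A_\infty$ constant of $M_tw$) rather than through the $A_{p'}$ constant of $v$, which is the delicate technical point of the argument.
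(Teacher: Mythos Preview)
The paper does not prove this lemma; it is quoted from Hyt\"onen--P\'erez \cite{hp2} without argument. So there is no in-paper proof to compare against, and I evaluate your proposal on its own terms.

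The gap is not in Step~3 but already in Step~1: the one-weight reduction is too lossy, and the inequality you set out to prove after it is in fact false with the constant $(t')^{1/p'}$. Take $t\in(1,2)$ and $w(x)=|x|^{-d(2-t)/t}$. Then $w^t=|x|^{-d(2-t)}$ is locally integrable and $M(|x|^{-\beta})\sim (d-\beta)^{-1}|x|^{-\beta}$, so $M_tw\sim (t')^{1/t}\,w$. Hence $v=(M_tw)^{1-p'}$ is a constant multiple of $|x|^{\alpha}$ with $\alpha=d(p'-1)(2-t)/t$, and the one-weight operator norm $\|M\|_{L^{p'}(v)\to L^{p'}(v)}$ equals $\|M\|_{L^{p'}(|x|^{\alpha})}$. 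For power weights Buckley's bound is \emph{sharp}: with $d(p'-1)-\alpha=2d(p'-1)(t-1)/t$ one has $[|x|^\alpha]_{A_{p'}}\sim (t')^{p'-1}$ and $\|M\|_{L^{p'}(|x|^\alpha)}\sim [|x|^\alpha]_{A_{p'}}^{1/(p'-1)}\sim t'$ (the extremizer is $f=|x|^{-d+\delta}\chi_{B(0,1)}$ with $\delta$ just above $2d(t-1)/(pt)$). So the one-weight estimate you are aiming for in Step~3 cannot hold with constant $(t')^{1/p'}$; the best possible constant there is $t'$, which is exactly what you already observed Buckley gives. Your Marcinkiewicz plan therefore cannot succeed, because the target inequality is false.

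The sharp exponent $1/p'$ in the lemma comes precisely from the \emph{mismatch} between $(M_tw)^{1-p'}$ on the left and $w^{1-p'}$ on the right. In the example above, $(M_tw)^{1-p'}/w^{1-p'}\sim (t')^{(1-p')/t}$, which contributes a factor $(t')^{-1/p}$ to the two-weight operator norm and converts the one-weight $t'$ into $(t')^{1-1/p}=(t')^{1/p'}$. A correct proof must retain this two-weight structure --- for instance by combining the pointwise bound $(M_tw)^{1-p'}\le\langle w^t\rangle_Q^{(1-p')/t}$ on each cube $Q$ with H\"older's inequality $\langle w\rangle_Q\le\langle w^t\rangle_Q^{1/t}$ inside a Carleson-embedding/stopping-time argument for the dyadic maximal function --- rather than collapsing both sides to the single weight $v$.
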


\begin{lemma}[see \cite{lpr} or \cite{ler6}]\label{lem4.2} Let $p\in (1,\,\infty)$ and $v$ be a weight. Let $S$ be the operator defined by $$S(h) =v^{-\frac{1}{p}}M(hv^{\frac{1}{p}})$$ and $R$ be the operator defined by
\begin{eqnarray}\label{eq4.1}
R(h) = \sum_{k=0}^{\infty}\frac{1}{2^k}\frac{S^kh}{\|S\|_{L^p(\mathbb{R}^d,\,v)\rightarrow L^p(\mathbb{R}^d,\,v)}^k}.\end{eqnarray}
Then for any $h\in L^p(\mathbb{R}^d,\,v)$,
\begin{itemize}\item[\rm (i)] $0\leq h\leq R(h)$,
\item[\rm (ii)] $\|R(h)\|_{L^p(\mathbb{R}^d,\,v)}\leq  2\|h\|_{L^p(\mathbb{R}^d,\,v)},$
\item[\rm (iii)] $R(h)v^{\frac{1}{p}}\in A_1(\mathbb{R}^d)$ with $[R(h)v^{\frac{1}{p}}]_{A_1}\leq c_dp'$. Furthermore, when $v = M_rw$
for some $r\in [1,\,\infty)$,  we also have that $[Rh]_{A_{\infty}} \leq c_dp'.$
\end{itemize}
\end{lemma}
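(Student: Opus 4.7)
The plan is to establish all three items by running the standard Rubio de Francia iteration scheme with the specific choices in the statement.

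Parts (i) and (ii) are essentially immediate. Every term of the series defining $R(h)$ is non-negative, and the $k=0$ term is $h$ itself (with the convention $\|S\|^0=1$), giving $0\le h\le R(h)$. The triangle inequality in $L^p(\mathbb{R}^d,v)$ combined with $\|S^kh\|_{L^p(v)}\le \|S\|^k\|h\|_{L^p(v)}$ makes the series in $R(h)$ telescope into a geometric series in $2^{-k}$, yielding $\|R(h)\|_{L^p(v)}\le 2\|h\|_{L^p(v)}$.

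The main part of (iii) rests on the quasi-invariance of $R$ under $S$: reindexing $j=k+1$ gives
\[
S(R(h))=\sum_{k=0}^{\infty}\frac{S^{k+1}h}{2^k\|S\|^k}=2\|S\|\sum_{j=1}^{\infty}\frac{S^jh}{2^j\|S\|^j}\le 2\|S\|\cdot R(h).
\]
Unpacking the definition of $S$, this is the pointwise bound $M(R(h)v^{1/p})\le 2\|S\|\cdot R(h)v^{1/p}$, which is the $A_1$ condition for $R(h)v^{1/p}$ with constant at most $2\|S\|$. The norm $\|S\|_{L^p(v)\to L^p(v)}$ is then identified with $\|M\|_{L^p\to L^p}$ via the change of variable $g=hv^{1/p}$, so the classical Hardy-Littlewood maximal inequality gives $\|S\|\le c_dp'$ and hence $[R(h)v^{1/p}]_{A_1}\le c_dp'$.

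For the \emph{furthermore} assertion, the extra input is a structural property of $v=M_rw$: by Coifman-Rochberg applied to $M(w^r)$, suitable powers of $v=(M(w^r))^{1/r}$ belong to $A_1$ with dimensional constants. Writing $R(h)=(R(h)v^{1/p})\cdot v^{-1/p}$ and invoking Jones factorization presents $R(h)$ as $u_1u_2^{1-q}$ with $u_1,u_2\in A_1$, placing it in some $A_q\subset A_\infty$. The main obstacle I expect here is matching the clean bound $c_dp'$: a naive use of Jones factorization introduces a $(q-1)$-power of the auxiliary $A_1$ constant, which is not uniform in $p$. The remedy is to bypass the $A_q$ step and estimate $[R(h)]_{A_\infty}$ directly through the Fujii-Wilson formula, exploiting the pointwise inequality $M(R(h)v^{1/p})\le c_dp'\cdot R(h)v^{1/p}$ from step (iii) together with the self-improvement of $M_rw$ on cubes to absorb the $v^{\pm 1/p}$ factors at the cost of only a dimensional constant, delivering $[R(h)]_{A_\infty}\le c_dp'$.
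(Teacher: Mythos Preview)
The paper does not give its own proof of this lemma; it is quoted verbatim from the references \cite{lpr} and \cite{ler6}. So there is nothing to compare against on the paper's side.

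Your treatment of (i), (ii), and the main assertion of (iii) is correct and is exactly the standard Rubio de Francia argument: the $k=0$ term gives $h\le R(h)$, the geometric series gives the $L^p$ bound, and the reindexing $S(R(h))\le 2\|S\|R(h)$ together with $\|S\|_{L^p(v)\to L^p(v)}=\|M\|_{L^p\to L^p}\le c_dp'$ gives $[R(h)v^{1/p}]_{A_1}\le c_dp'$.

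The gap is in the ``furthermore'' part. Your claim that Coifman--Rochberg gives $v^{1/p}=(M(w^r))^{1/(rp)}\in A_1$ with a \emph{dimensional} constant is not correct: the Coifman--Rochberg bound for $(Mg)^{\delta}$ is $c_d/(1-\delta)$, and here $\delta=1/(rp)$ can be arbitrarily close to $1$ when $r=1$ and $p\to 1^+$, so the constant is of order $p'$, not dimensional. Your fallback --- ``estimate $[R(h)]_{A_\infty}$ directly through Fujii--Wilson \ldots\ absorb the $v^{\pm 1/p}$ factors at the cost of only a dimensional constant'' --- is not an argument; it is a hope, and you have not explained what inequality would accomplish this absorption.

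The clean fix (this is essentially what is done in \cite{lpr}) stays within the factorization framework but chooses the exponent adaptively. Write
\[
R(h)=\big(R(h)v^{1/p}\big)\cdot\big((M(w^r))^{1/(rp(q-1))}\big)^{1-q}
\]
and pick $q-1=2/(rp)$, so that the Coifman--Rochberg exponent is $1/2$ and hence $[(M(w^r))^{1/2}]_{A_1}\le 2c_d$ is genuinely dimensional. Then reverse factorization gives $[R(h)]_{A_q}\le c_dp'\cdot(2c_d)^{q-1}\le c_dp'\cdot(2c_d)^{2}$, and the general bound $[w]_{A_\infty}\le c_d[w]_{A_q}$ (valid for any $q\ge 1$) yields $[R(h)]_{A_\infty}\le c_dp'$. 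The point you were missing is that the freedom in $q$ lets you keep the Coifman--Rochberg exponent bounded away from $1$, which is exactly what makes the auxiliary $A_1$ constant dimensional.
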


\begin{lemma}[see \cite{hp}]\label{lem4.3} Let $w\in A_{\infty}(\mathbb{R}^d)$. Then for any cube $Q$ and $\delta\in (1,\,1+\frac{1}{2^{11+d}[w]_{A_{\infty}}}]$,
$$\Big(\frac{1}{|Q|}\int_Qw^{\delta}(x)dx\Big)^{\frac{1}{\delta}}\leq \frac{2}{|Q|}\int_{Q}w(x)dx.$$
\end{lemma}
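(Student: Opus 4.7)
The plan is to prove Lemma~\ref{lem4.3}, the sharp reverse H\"older inequality, through a dyadic Calder\'on--Zygmund stopping-time argument driven by the Wilson form of the $A_\infty$ constant
\[
[w]_{A_\infty}=\sup_{Q}\frac{1}{w(Q)}\int_Q M(w\chi_Q)(x)\,dx.
\]
Write $a:=\langle w\rangle_Q$ and let $M_Q^d$ denote the dyadic maximal operator relative to $\mathcal{D}(Q)$. By the Lebesgue differentiation theorem $w(x)\leq M_Q^d w(x)$ a.e.\ on $Q$, and since $\delta>1$ this gives $w^\delta\leq w\,(M_Q^d w)^{\delta-1}$; combining with the layer-cake formula yields
\[
\int_Q w^\delta\,dx\;\leq\; a^\delta|Q|\;+\;(\delta-1)\int_a^\infty t^{\delta-2}\,w(\Omega_t)\,dt,\qquad \Omega_t:=\{x\in Q:\,M_Q^d w(x)>t\},
\]
the first term absorbing the low range $0<t\leq a$ via the trivial bound $w(\Omega_t)\leq w(Q)=a|Q|$.

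For $t>a$, I Calder\'on--Zygmund decompose $\Omega_t=\bigsqcup_j P_j^t$ into the maximal dyadic cubes in $\mathcal{D}(Q)$ satisfying $\langle w\rangle_{P_j^t}>t$, with the automatic upper bound $\langle w\rangle_{P_j^t}\leq 2^d t$. Since $M(w\chi_Q)(x)\geq \langle w\rangle_{P_j^t}>t$ for $x\in P_j^t$, one obtains the baseline level-set bound
\[
t\,|\Omega_t|\;\leq\;\int_{\Omega_t}M(w\chi_Q)\,dx\;\leq\;[w]_{A_\infty}\,a|Q|.
\]
The essential ingredient is to bootstrap this into effective decay of $w(\Omega_t)$: by the maximality of the stopping cubes in $\mathcal{D}(Q)$, one has $\Omega_s\cap P_j^t=\{x\in P_j^t:M_{P_j^t}^d w(x)>s\}$ whenever $s>2^d t$, so the Wilson $A_\infty$ inequality can be re-applied inside each $P_j^t$ with the same constant $[w]_{A_\infty}$. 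A nested induction at geometric levels $\tau^k a$ with ratio $\tau\asymp 2^{d+1}[w]_{A_\infty}$ produces a decay sufficient to render the high-range layer-cake integral convergent, with total contribution bounded by $C_d(\delta-1)[w]_{A_\infty}\,a^\delta|Q|$.

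The assumption $\delta-1\leq 1/(2^{11+d}[w]_{A_\infty})$ is calibrated exactly so that $C_d(\delta-1)[w]_{A_\infty}$ is smaller than $2^\delta-1$, whence $\int_Q w^\delta\,dx\leq 2^\delta a^\delta|Q|$, i.e.\ $\langle w^\delta\rangle_Q^{1/\delta}\leq 2\langle w\rangle_Q$, which is the claim. The main obstacle is the careful bookkeeping of the nested Calder\'on--Zygmund iteration: the dimensional factor $2^d$ enters at every stopping generation, and balancing it against the geometric decay ratio while tracking the numerical constants tightly enough to land on the explicit threshold $2^{11+d}$ (rather than a worse dimensional constant) is the most delicate aspect; in particular, the iteration ratio $\tau$ must be taken slightly above the natural threshold $2^d[w]_{A_\infty}$ so that the resulting geometric series sums to a tractable absolute constant.
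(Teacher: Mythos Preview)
The paper does not supply a proof of this lemma; it is quoted from Hyt\"onen--P\'erez \cite{hp}. Your layer-cake reduction $\int_Q w^\delta\le a^\delta|Q|+(\delta-1)\int_a^\infty t^{\delta-2}w(\Omega_t)\,dt$ and the Calder\'on--Zygmund stopping-time setup are exactly the opening of that argument. The gap lies in the step you describe as ``nested induction at geometric levels $\tau^k a$ with $\tau\asymp 2^{d+1}[w]_{A_\infty}$.'' That iteration does yield the Lebesgue-measure decay $|\Omega_{\tau^{k}a}|\le 2^{-k}|Q|$, but the only available passage back to $w$-measure is $w(\Omega_t)\le 2^d t\,|\Omega_t|$, whence $w(\Omega_{\tau^k a})\le 2^d a|Q|\,(\tau/2)^k$, which \emph{grows} because $\tau>2$. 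Summing the layer-cake integral over the shells $[\tau^k a,\tau^{k+1}a)$ then produces a series comparable to $\sum_k(\tau^{\delta}/2)^k$, divergent for every $\delta>1$; so this mechanism cannot yield the bound $C_d(\delta-1)[w]_{A_\infty}\,a^\delta|Q|$ you assert.

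The device that actually closes the argument in \cite{hp} is absorption, not iteration. After $w(\Omega_t)\le 2^d t|\Omega_t|$ one has $(\delta-1)\int_a^\infty t^{\delta-2}w(\Omega_t)\,dt\le \frac{2^d(\delta-1)}{\delta}\int_Q(M_Q^d w)^\delta\,dx$. For the latter, writing $\int_Q(M_Q^d w)^\delta=(\delta-1)\int_0^\infty t^{\delta-2}\!\int_{\Omega_t}M_Q^d w\,dx\,dt$ and using the localization $M_Q^d w=M_{P_j^t}^d w$ on each stopping cube together with the Wilson bound gives $\int_{\Omega_t}M_Q^d w\le [w]_{A_\infty}w(\Omega_t)\le 2^d[w]_{A_\infty}t|\Omega_t|$ for $t>a$; the resulting right-hand side contains $\frac{2^d[w]_{A_\infty}(\delta-1)}{\delta}\int_Q(M_Q^d w)^\delta$, which is absorbed into the left once $2^d[w]_{A_\infty}(\delta-1)\le\tfrac12$. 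Feeding this back yields $\int_Q w^\delta\le 2a^\delta|Q|$ directly, with no geometric-series summation at all.
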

\vskip0.24cm

\section{Endpoint estimates for sparse operators}

The main purpose of this section is to establish a weighted weak type endpoint estimate for the operator which enjoys $(L(\log)^{\beta},L^q)$-bilinear sparse domination. We begin with some lemmas.

\begin{lemma}\label{lem4.4}Let $\beta\in [0,\,\infty)$, $r\in [1,\,\infty)$ and $w$ be a weight. Then for any $t\in (1,\,\infty)$ and $p\in(1,\,r')$ such that
$t\frac{p'/r-1}{p'-1}>1$,
\begin{equation*}\mathcal{A}_{\mathcal{S},L(\log L)^{\beta},L^r}(f,\,g)\lesssim p'^{1+
\beta}\big(\frac{p'}{r}\big)'\big(t\frac{p'/r-1}{p'-1}\big)'^{\frac{1}{p'}}\|f\|_{L^p(\mathbb{R}^d,\,M_tw)}
\|g\|_{L^{p'}(\mathbb{R}^d,w^{1-p'})}.
\end{equation*}
\end{lemma}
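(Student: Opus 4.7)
The plan is to reduce the bilinear sparse form to an integral of maximal functions, split by Hölder's inequality in the weighted space, and then control each factor separately: Lemma~\ref{lem4.1} will handle the $g$-factor, and a sharp weighted bound for $M_{L(\log L)^{\beta}}$ will handle the $f$-factor. The hypothesis $t(p'/r-1)/(p'-1)>1$ will emerge as precisely the condition needed to invoke Lemma~\ref{lem4.1} after a reparametrization of weights.

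Using the $\eta$-sparseness of $\mathcal{S}$, I pick pairwise disjoint $E_Q\subset Q$ with $|E_Q|\geq\eta|Q|$. The pointwise bounds $\|f\|_{L(\log L)^{\beta},Q}\leq M_{L(\log L)^{\beta}}f(x)$ and $\langle g\rangle_{Q,r}\leq M_{r}g(x)$, valid for $x\in E_Q\subset Q$, give
\[
\mathcal{A}_{\mathcal{S},L(\log L)^{\beta},L^{r}}(f,g)\lesssim\int_{\mathbb{R}^{d}}M_{L(\log L)^{\beta}}f(x)\,M_{r}g(x)\,dx,
\]
and Hölder's inequality with the weight splitting $(M_{t}w)^{1/p}\cdot (M_{t}w)^{-1/p}$ bounds the right-hand side by the product $\|M_{L(\log L)^{\beta}}f\|_{L^{p}(M_{t}w)}\,\|M_{r}g\|_{L^{p'}((M_{t}w)^{1-p'})}$.

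For the $g$-factor, I rewrite $M_{r}g=(M|g|^{r})^{1/r}$ and set $q=p'/r$ (well-defined with $q>1$ since $p<r'$), $\alpha=(p'-1)/(q-1)$, $\tilde{t}=t/\alpha=t(p'/r-1)/(p'-1)$, and $\tilde{w}=w^{\alpha}$. A direct computation yields the two identities $(M_{\tilde{t}}\tilde{w})^{1-q}=(M_{t}w)^{1-p'}$ and $\tilde{w}^{1-q}=w^{1-p'}$, while the hypothesis becomes exactly $\tilde{t}>1$, so Lemma~\ref{lem4.1} applies to $M|g|^{r}$ with parameters $(q,\tilde{t},\tilde{w})$. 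Taking the $1/r$-th power (and using $1/(rq)=1/p'$ together with $(q')^{1/r}\leq q'$) yields
\[
\|M_{r}g\|_{L^{p'}((M_{t}w)^{1-p'})}\lesssim (p'/r)'\,\tilde{t}'^{1/p'}\,\|g\|_{L^{p'}(w^{1-p'})}.
\]

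For the $f$-factor, I need $\|M_{L(\log L)^{\beta}}f\|_{L^{p}(M_{t}w)}\lesssim p'^{1+\beta}\|f\|_{L^{p}(M_{t}w)}$. The plan is to combine the pointwise comparison $M_{L(\log L)^{\beta}}f\lesssim(s-1)^{-\beta}M_{s}f$ (valid for any $s>1$) with a Buckley-type weighted $L^{p}$-bound for $M_{s}$ on $L^{p}(M_{t}w)$, choosing $s-1\sim 1/p'$ so that the combined constants collapse to $p'^{1+\beta}$; the $A_{1}$-character of $M_{t}w$ is exploited here. Multiplying the two factor bounds gives the stated inequality. The main obstacle I expect is this last step: tracking the constants carefully enough to isolate the $p'^{1+\beta}$ factor without spurious $t'$-dependence, which would otherwise clash with the claimed bound.
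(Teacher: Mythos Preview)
Your treatment of the $g$-factor is correct and essentially matches the paper's computation. The gap is in the $f$-factor, and the obstacle you flag is real and, as far as I can see, fatal for your route. The bound $\|M_{L(\log L)^{\beta}}f\|_{L^{p}(M_{t}w)}\lesssim p'^{\,1+\beta}\|f\|_{L^{p}(M_{t}w)}$, uniformly in $t>1$, is not accessible by the scheme you sketch. Once you pass to $M_{s}f$ with $s-1\sim 1/p'$, you need $\|M_{s}\|_{L^{p}(M_{t}w)\to L^{p}(M_{t}w)}\lesssim p'$; but every standard weighted bound for $M$ (Buckley, Fefferman--Stein combined with $A_{1}$, or the mixed $A_{p}$--$A_{\infty}$ estimates) feeds on the $A_{p}$ or $A_{1}$ constant of $M_{t}w$, and Coifman--Rochberg gives only $[M_{t}w]_{A_{1}}\lesssim t'$, which is sharp in general (e.g.\ $w$ a point mass gives $M_{t}w\sim |x|^{-d/t}$, whose $A_{p}$ constant is $\sim t'$). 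In the intended application (Corollary~\ref{cor1}) one takes $t-1\sim 1/[w]_{A_{\infty}}$ and $p'\sim\log(\mathrm{e}+[w]_{A_{\infty}})$, so even the condition $s<t$ needed to iterate Coifman--Rochberg can fail, and in any case the resulting $t'$-power would destroy the quantitative conclusion downstream.

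The paper avoids this by \emph{not} passing through $M_{L(\log L)^{\beta}}f$ at all. Instead it runs the Rubio de Francia construction of Lemma~\ref{lem4.2} with $v=M_{t}w$: the decisive point is part~(iii), which says $[Rf]_{A_{\infty}}\lesssim p'$ \emph{independently of $t$} when $v=M_{t}w$. One then bounds $\|f\|_{L(\log L)^{\beta},Q}\lesssim s'^{\beta}\langle Rf\rangle_{Q,s}\lesssim p'^{\beta}\langle Rf\rangle_{Q}$ via reverse H\"older (Lemma~\ref{lem4.3}) with $s-1\sim 1/[Rf]_{A_{\infty}}\sim 1/p'$. This reduces the sparse form to $\sum_{Q}\langle g\rangle_{Q,r}\int_{Q}Rf$, which a Carleson-embedding argument (again using only $[Rf]_{A_{\infty}}\lesssim p'$) controls by $p'\int M_{r}g\cdot Rf$. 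H\"older's inequality and $\|Rf\|_{L^{p}(M_{t}w)}\le 2\|f\|_{L^{p}(M_{t}w)}$ then finish the proof. The moral: the factor $p'^{\,1+\beta}$ comes entirely from $[Rf]_{A_{\infty}}$, never from a weighted operator norm of $M$ on $L^{p}(M_{t}w)$.
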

\begin{proof}
Let $p\in (1,\,r')$, $f\in C^{\infty}_0(\mathbb{R}^d)$ with $\|f\|_{L^{p}(\mathbb{R}^d,\,M_tw)}=1$ and $Rf$ be the function defined by (\ref{eq4.1}). Recall that
$$\|f\|_{L(\log L)^{\beta},\,Q}\lesssim \Big(1 +\big(\frac{\beta}{s-1}\big)^{\beta}\Big)\Big(\frac{1}{|Q|}\int_{Q}|f(y)|^sdy\Big)^{\frac{1}{s}}.$$
Applying Lemma \ref{lem4.2} with $v=M_tw$ and Lemma \ref{lem4.3}, we then get that
\begin{eqnarray*}
\sum_{Q\in \mathcal{S}}\|f\|_{L(\log L)^{\beta},\,Q}\langle |g|\rangle_{Q,\,r}|Q|&\lesssim &s'^{\beta}
\sum_{Q\in\mathcal{S}}\langle |g|\rangle_{Q,\,r}\langle |f|\rangle_{Q,\,s}|Q|\\
&\lesssim&s'^{\beta}\sum_{Q\in\mathcal{S}}\langle |g|\rangle_{Q,\,r}\langle |Rf|\rangle_{Q,\,s}|Q|\\
&\lesssim&p'^{\beta}\sum_{Q\in\mathcal{S}}\langle |g|\rangle_{Q,\,r}\int_{Q}Rf(y)dy,\end{eqnarray*}
if we choose $s=1+\frac{1}{2^{11+d}[Rf]_{A_{\infty}}}$.
As in the proof of Lemma 4.1 in \cite{hp2}, we see that
\begin{eqnarray*}
\sum_{Q\in\mathcal{S}}\langle |g|\rangle_{Q,\,r}\int_{Q}Rf(x)dx&\lesssim& \sum_{Q\in \mathcal{S}}\inf_{y\in Q}M_rg(y)\int_{Q}Rf(x)dx\\
&\lesssim&[Rh]_{A_{\infty}}\int_{\mathbb{R}^d}M_rg(x)Rf(x)dx\\
&\lesssim& p'\int_{\mathbb{R}^d}M_rg(x)Rf(x)dx.
\end{eqnarray*}
H\"older's inequality, along with Lemma \ref{lem4.1}, tells us that
\begin{equation*}
\begin{split}
\int_{\mathbb{R}^d}M_rg(x)Rf(x)dx&\lesssim\Big[\int_{\mathbb{R}^d}\big[M_rg(x)\big]^{p'}(M_tw(x))^{1-p'}dx\Big]^{\frac{1}{p'}}\|Rf\|_{L^p(\mathbb{R}^d,M_tw)}\\
&=\Big[\int_{\mathbb{R}^d}\big[M(|g|^r)(x)\big]^{\frac{p'}{r}}\Big(M_{t\frac{p'/r-1}{p'-1}}(w^{\frac{p'-1}{p'/r-1}})(x)\Big)^{1-p'/r}dx\Big]^{\frac{1}{p'}}\\
&\lesssim\Big[\big(\frac{p'}{r}\big)'\big(t\frac{p'/r-1}{p'-1}\big)'^{\frac{r}{p'}}\Big]^{\frac{1}{r}}
\|g\|_{L^{p'}(\mathbb{R}^d,\,w^{1-p'})}.
\end{split}\end{equation*}
Combining the estimates above leads to that
\begin{equation*}
\sum_{Q\in \mathcal{S}}\|f\|_{L(\log L)^{\beta}}\langle |g|\rangle_{Q,\,r}|Q|\lesssim p'^{1+\beta}\big(\frac{p'}{r}\big)'\big(t\frac{p'/r-1}{p'-1}\big)'^{\frac{1}{p'}}\|g\|_{L^{p'}(\mathbb{R}^d,\,w^{1-p'})}.
\end{equation*}
This, via homogeneity, implies our required estimate and completes the proof of Lemma \ref{lem4.4}.
\end{proof}

Let $U$ be an operator on $\cup_{p\geq 1}L^p(\mathbb{R}^d)$. We say that $U$ is sublinear, if for all functions $f_1,\,f_2$ and $x\in\mathbb{R}^d$,
$$|U(f_1+f_2)(x)|\leq |U(f_1)(x)|+|U(f_2)(x)|,$$
and for all $\lambda\in\mathbb{R}$ and function $f$,$$|\lambda Uf(x)|=|U(\lambda f)(x)|.$$

\begin{theorem}\label{thm4.1}Let  $\alpha,\,\beta\in \mathbb{N}\cup\{0\}$, $t,\,r\in [1,\,\infty)$, $p_1\in (1,\,r')$ such that
$t\frac{p_1'/r-1}{p_1'-1}>1$. Let $U$ be a  sublinear operator which enjoys a $(L(\log L)^{\beta},\,L^r)$-sparse domination with bound $D$.
Then for any weight $u$ and bounded function $f$ with compact support,
\begin{eqnarray}\label{ine:3.2}
&&u(\{x\in\mathbb{R}^d:\,|Uf(x)|>1\})\\
&&\quad\lesssim \Big(1+ \Big\{Dp_1'^{1+\beta}\big(\frac{p_1'}{r}\big)'\big(t\frac{p_1'/r-1}{p_1'-1}\big)'^{\frac{1}{p_1'}}\Big\}^{p_1}\Big)\nonumber\\
&&\qquad\times\int_{\mathbb{R}^d}|f(y)|\log^{\beta}({\rm e}+|f(y)|)M_tu(y)dy.\nonumber
\end{eqnarray}
\end{theorem}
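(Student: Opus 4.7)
The plan is to split $f$ by size and treat the small and large parts separately, exploiting the strong-type $L^{p_1}$ consequence of the sparse domination for the former and applying the sparse form directly for the latter. Set $f_1 := f\chi_{\{|f|\le 1\}}$ and $f_2 := f\chi_{\{|f|>1\}}$; by sublinearity,
\[
u(\{|Uf|>1\}) \le u(\{|Uf_1|>1/2\}) + u(\{|Uf_2|>1/2\}),
\]
and I handle each term in turn.

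For the small part, the key is that Lemma~\ref{lem4.4} applied to the sparse domination, together with the standard duality $\bigl(L^{p_1'}(u^{1-p_1'})\bigr)^* = L^{p_1}(u)$, gives the weighted strong-type inequality $\|Uh\|_{L^{p_1}(u)} \lesssim DC_1\,\|h\|_{L^{p_1}(M_tu)}$ with $C_1 := p_1'^{1+\beta}(p_1'/r)'\bigl(t(p_1'/r-1)/(p_1'-1)\bigr)'^{1/p_1'}$. Plugging in $h=f_1$, Chebyshev's inequality together with the pointwise bound $|f_1|^{p_1}\le |f_1|\le |f|\log^\beta(e+|f|)$ yields
\[
u(\{|Uf_1|>1/2\}) \lesssim (DC_1)^{p_1} \int |f|\log^\beta(e+|f|)\,M_tu.
\]

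For the large part, I set $E_2 := \{|Uf_2|>1/2\}$ and test the bilinear sparse domination against $g := \chi_{E_2}u$, obtaining
\[
\tfrac12 u(E_2) \le \int |Uf_2|\chi_{E_2}u \le D\sum_{Q\in\mathcal{S}} |Q|\,\|f_2\|_{L(\log L)^\beta,Q}\,\langle \chi_{E_2}u\rangle_{Q,r}.
\]
Using the sparseness $|E_Q|\ge |Q|/2$ with disjoint $\{E_Q\}$ and the pointwise bounds $\|f_2\|_{L(\log L)^\beta,Q}\le M_{L(\log L)^\beta}f_2(x)$ and $\langle\chi_{E_2}u\rangle_{Q,r}\le M_r u(x)$ valid for $x\in E_Q\subseteq Q$, the sum is dominated by $4D\int M_{L(\log L)^\beta}f_2\cdot M_ru$. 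A direct computation shows that the theorem's hypothesis $t(p_1'/r-1)/(p_1'-1)>1$ combined with $r\ge 1$ forces $t\ge r$ and $t>1$, so that $M_ru\le M_tu$ pointwise and, by Coifman--Rochberg, $M_tu\in A_1$ with $[M_tu]_{A_1}\lesssim t'$. The integer $\beta$ ensures the pointwise comparison $M_{L(\log L)^\beta}h\lesssim M^{\beta+1}h$, and an iterated application of the Fefferman--Stein / P\'erez-type inequality (exploiting $M^j(M_tu)\lesssim [M_tu]_{A_1}^j M_tu$) upgrades this to
\[
\int M_{L(\log L)^\beta}f_2\cdot M_tu \lesssim \int |f_2|\log^\beta(e+|f_2|)\,M_tu \le \int |f|\log^\beta(e+|f|)\,M_tu,
\]
so that $u(E_2) \lesssim D\int |f|\log^\beta(e+|f|)\,M_tu$. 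Combining with the small-part estimate and noting $D \le 1+(DC_1)^{p_1}$ when $C_1\ge 1$ gives the claim.

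The main obstacle I expect is the Pérez-type step in the large-part argument: the naive strong-type inequality $\int Mh\cdot W\lesssim \int |h|\,W$ fails even for $W\in A_1$ (since $M$ is only weak-type on $L^1(W)$), so the iteration must carefully track the logarithmic loss at each level and use the quantitative $A_1$ bound $[M_tu]_{A_1}\lesssim t'$ to ensure the constants aggregate into a single factor independent of $p_1$. This is the place where the $L(\log L)^\beta$ structure of the sparse domination is genuinely used, and is what gives an improvement over the $\log^{1+\beta}$ bound that a crude Yano-type extrapolation from the strong $L^{p_1}$ estimate alone would produce.
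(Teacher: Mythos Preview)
Your small-part argument (Chebyshev plus Lemma~\ref{lem4.4} via duality) is fine, but the large-part argument has a genuine gap at the step
\[
\int_{\mathbb{R}^d} M_{L(\log L)^{\beta}}f_2\cdot M_tu \;\lesssim\; \int_{\mathbb{R}^d} |f_2|\log^{\beta}(\mathrm{e}+|f_2|)\,M_tu.
\]
Take $\beta=0$ (which the theorem allows). The inequality becomes $\int Mf_2\cdot W\lesssim \int |f_2|\,W$ with $W=M_tu\in A_1$, i.e.\ boundedness of $M$ on $L^1(W)$, and this is simply false: with $u\equiv 1$ (so $W\equiv 1$) and $f_2=2\chi_B$ for a ball $B$, the right side is finite while $M\chi_B\notin L^1(\mathbb{R}^d)$. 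Your proposed fix of ``tracking the logarithmic loss at each level'' cannot repair this, because no finite power of the logarithm helps: for every $N\ge 0$ the inequality $\int Mh\cdot W\lesssim \int |h|\log^{N}(\mathrm{e}+|h|)\,W$ fails on the same example. The point is that once you pass from the sparse form to the global integral $\int M_{L(\log L)^{\beta}}f_2\cdot M_r(\chi_{E_2}u)$ and then discard the localization $\chi_{E_2}$, you are asking for a strong $L^1$-type bound on an iterated maximal function, and there is none. Keeping the $\chi_{E_2}$ does not obviously help either, since any attempt to absorb $u(E_2)$ back via H\"older loses the weight $M_tu$ on the right-hand side.

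The paper avoids this by decomposing in \emph{space} rather than in the height of $f$: it runs a Calder\'on--Zygmund stopping-time argument for $M_{\mathscr{D}_j,L(\log L)^{\beta}}f$ at level $1$, producing cubes $\{Q_{jk}\}$ and an exceptional set $E=\bigcup 4dQ_{jk}$. The mass $u(E)$ is controlled directly by the weak-type bound \eqref{eq1.7}. Off $E$, the bad part $f_2^j=\sum_k f\chi_{Q_{jk}}$ can be replaced inside the sparse form by the \emph{bounded} function $f_3^j=\sum_k\|f\|_{L(\log L)^{\beta},Q_{jk}}\chi_{Q_{jk}}$ (this is the step where the Calder\'on--Zygmund geometry is essential), and then the same $L^{p_1}$ strong-type bound from Lemma~\ref{lem4.4} applies to $f_3^j$ exactly as it did to your $f_1$, because $\|f_3^j\|_{L^\infty}\lesssim 1$ gives $|f_3^j|^{p_1}\lesssim |f_3^j|$. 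Your height decomposition produces the right ``good'' piece $f_1$ but no usable substitute for the bad piece; the Calder\'on--Zygmund construction is what supplies one.
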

\begin{proof}
Let $f$   be  a bounded function with compact support, and $\mathcal{S}$ be the sparse family such that
for $g\in L^r_{{\rm loc}}(\mathbb{R}^d)$,
\begin{eqnarray*}\Big|\int_{\mathbb{R}^d}Uf(x)g(x)dx\Big|\leq D\mathcal{A}_{\mathcal{S};\,L(\log L)^{\beta},\,L^r}(f,\,g).\end{eqnarray*} By the one-third trick (see \cite[Lemma 2.5]{hlp}), there exist dyadic grids $\mathscr{D}_1,\,\dots,\,\mathscr{D}_{3^d}$ and sparse families $\mathcal{S}_1,\,\dots,\,\mathcal{S}_{3^d}$, such that for $j=1,\,\dots,\,3^d$,
$\mathcal{S}_j\subset \mathscr{D}_j$, and
$$\mathcal{A}_{\mathcal{S};\,L(\log L)^{\beta},\,L^r}(f,\,g)\lesssim\sum_{j=1}^{3^d}\mathcal{A}_{\mathcal{S}_j;\,L(\log L)^{\beta},\,L^r}(f,\,g).$$

Now let  $M_{\mathscr{D}_j, L(\log L)^{\beta}}$ be the maximal operator defined by
\begin{eqnarray}\label{maximaloperator}M_{\mathscr{D}_j,L(\log L)^{\beta}}h(x)=\sup_{Q\ni x,\,Q\in\mathscr{D}_j}\|h\|_{L(\log L)^{\beta},\,Q}.\end{eqnarray}
For each $j=1,\,\dots,\,3^d$, decompose the set $\{x\in\mathbb{R}^d:\,M_{\mathscr{D}_j,L(\log L)^{\beta}}f(x)>1\}$ as
$$\{x\in\mathbb{R}^d:\,M_{\mathscr{D}_j,L(\log L)^{\beta}}f(x)>1\}=\cup_{k}Q_{jk},$$
with $Q_{jk}$ the maximal cubes in $\mathscr{D}_j$ such that $\|f\|_{L(\log L)^{\beta},\,Q_{jk}}>1$. We have that
$$1 <\|f\|_{L(\log L)^{\beta},\,Q_{jk}}\lesssim 2^d.$$Let
$$f_1^j(y) = f(y)\chi_{\mathbb{R}^d\backslash \cup_kQ_{jk}}(y),\,\, f_2^j(y) =
\sum_k
f(y)\chi_{Q_{jk}}(y),$$
and
$$f_3^j(y) =\sum_k\|f\|_{L(\log L)^{\beta},\,Q_{jk}}\chi_{Q_{jk}}(y).$$
It is obvious that $\|f_1^j\|_{L^1(\mathbb{R}^d)}\lesssim \|f\|_{L^1(\mathbb{R}^d)}$, $\|f_1^j\|_{L^{\infty}(\mathbb{R}^d)}\lesssim 1$ and $\|f^j_3\|_{L^{\infty}(\mathbb{R}^d)}\lesssim 1$.

Let $u$ be a weight and $p_1\in (1,\,\infty)$. It then follows from Lemma \ref{lem4.4} that
\begin{eqnarray}\label{eq4.4}
&&\mathcal{A}_{\mathcal{S};\,L(\log L)^{\beta},\,L^r}(f_1^j,\,g)\\
&&\quad\lesssim p_1'^{1+\beta}\big(\frac{p_1'}{r}\big)'\big(t\frac{p_1'/r-1}{p_1'-1}\big)'^{\frac{1}{p_1'}}
\|f_1^j\|_{L^{p_1}(\mathbb{R}^d,\,M_tu)}\|g\|_{L^{p_1'}(\mathbb{R}^d,\,u^{1-p'})}.\nonumber\end{eqnarray}
Let $E=\cup_{j=1}^{3^d}\cup_k4dQ_{jk}$ and   $\widetilde{u}(y)=u(y)\chi_{\mathbb{R}^d\backslash E}(y).$ It is obvious that
\begin{eqnarray}\label{eq4.5}&&u(E)\lesssim \sum_{j,k}\inf_{z\in Q_{jk}}Mu(z)|Q_{jk}|\lesssim \int_{\mathbb{R}^d}|f(y)|\log^{\beta}({\rm e}+|f(y)|)Mu(y)dy.
\end{eqnarray}
Moreover, by the fact that $$\inf_{y\in Q_{jk}}M_{t}\widetilde{u}(y)\approx\sup_{z\in Q_{jk}}M_{t}\widetilde{u}(z),$$
we obtain that for $\gamma\in [0,\,\infty)$,
\begin{eqnarray}\label{eq4.6}
\|f_3^j\|_{L^1(\mathbb{R}^d,\,M_{t}\widetilde{u})}
&\lesssim &\sum_{k}\inf_{z\in Q_{jk}}M_{t}\widetilde{u}(z)|Q_{jk}|\|f\|_{L(\log L)^{\beta},\,Q_{jk}}\\
&\lesssim &\int_{\mathbb{R}^d}|f(y)|\log^{\beta} ({\rm e}+|f(y)|)M_{t}u(y){\rm d}y.\nonumber
\end{eqnarray}
Let $$\mathcal{S}_j^*=\{I\in\mathcal{S}_j:\, I\cap(\mathbb{R}^d\backslash E)\not =\emptyset\}.$$
Note that if ${\rm supp}\,g\subset \mathbb{R}^d\backslash E$, then
$$\mathcal{A}_{\mathcal{S}_j, L(\log L)^{\beta_1},\,L^r}(f_2^j,\,g)=\mathcal{A}_{\mathcal{S}^*_j, L(\log L)^{\beta_1},\,L^r}(f_2^j,\,g).$$
As in the argument in \cite[pp. 160-161]{hu}, we can verify that for each fixed $I\in \mathcal{S}^*_j$,
$$
\|f_2^j\|_{L(\log L)^{\beta},\,I}\lesssim \|f_3^j\|_{L(\log L)^{\beta},\,I}.
$$
Again by Lemma \ref{lem4.4}, we have that  for $g\in L^1(\mathbb{R}^d)$ with ${\rm supp}\,g\subset \mathbb{R}^d\backslash E$,
\begin{eqnarray}\label{eq4.7}&&\mathcal{A}_{\mathcal{S}_j,L(\log L)^{\beta},\,L^r }(f_2^j,\,g)\lesssim \mathcal{A}_{\mathcal{S}_j, L(\log L)^{\beta},\,L^r}(f_3^j,\,g)\\
&&\qquad\lesssim p_1'^{1+\beta}\big(\frac{p_1'}{r}\big)'\big(t\frac{p_1'/r-1}{p_1'-1}\big)'^{\frac{1}{p_1'}}\|f_3^j\|^{\frac{1}{p_1}}_{L^{1}(\mathbb{R}^d,\,M_tu)}\|g\|_{L^{p_1'}(\mathbb{R}^d\setminus E,\,u^{1-p'})}.\nonumber
\end{eqnarray}
Inequalities  (\ref{eq4.4}) and (\ref{eq4.7}) tell  us that
\begin{eqnarray*}
&&\sup_{\|g\|_{L^{p_1'}(\mathbb{R}^d\backslash E, \widetilde{u}^{1-p_1'})}\leq 1}\Big|\int_{\mathbb{R}^d}Uf(x)g(x)dx\Big|\\
&&\quad\lesssim D\sup_{\|g\|_{L^{p_1'}(\mathbb{R}^d\backslash E, \widetilde{u}^{1-p_1'})}\leq 1}\sum_{j=1}^{3^d}\Big(\mathcal{A}_{\mathcal{S}_j,\,L(\log L)^{\beta},L^r}(f_1^j,g)+\mathcal{A}_{\mathcal{S}_j,\,L(\log L)^{\beta},L^r}(f_2^j,g)\Big)\\
&&\quad\lesssim D p_1'^{1+\beta}\big(\frac{p_1'}{r}\big)'\big(t\frac{p_1'/r-1}{p_1'-1}\big)'^{\frac{1}{p_1'}}\Big(\|f_1^j\|^{\frac{1}{p_1}}_{L^{1}(\mathbb{R}^d,\,M_tu)}+
\|f_3^j\|^{\frac{1}{p_1}}_{L^{1}(\mathbb{R}^d,\,M_tu)}\Big).
\end{eqnarray*}
Thus together with inequalities (\ref{eq4.5}) and (\ref{eq4.6}), we know that
\begin{eqnarray*}
&&\quad u(\{x\in\mathbb{R}^d:\,|Uf(x)|>1\})\leq u(E)+\|Uf\|_{L^{p_1}(\mathbb{R}^d\backslash E,\,\widetilde{u})}^{p_1}\\
&&\lesssim \Big(1+ \Big\{Dp_1'^{1+\beta}\big(\frac{p_1'}{r}\big)'\big(t\frac{p_1'/r-1}{p_1'-1}\big)'^{\frac{1}{p_1'}}\Big\}^{p_1}\Big)\int_{\mathbb{R}^d}|f(y)|\log^{\beta}({\rm e}+|f(y)|)M_tu(y)dy.\nonumber
\end{eqnarray*}
This completes the proof of Theorem \ref{thm4.1}.\end{proof}

\begin{corollary}\label{cor1}Let  $\alpha,\,\beta\in \mathbb{N}\cup\{0\}$ and $U$ be a  sublinear operator. Suppose that for any  $r\in (1,\,3/2]$, $U$ satisfies bilinear $(L(\log L)^{\beta},\,L^r)$-sparse domination with bound $r'^{\alpha}$.
Then for any $w\in A_1(\mathbb{R}^d)$ and bounded function $f$ with compact support,
\begin{equation*}
\begin{split}
w(\{&x\in\mathbb{R}^d:\, |Uf(x)|>\lambda\})\\
&\quad\lesssim [w]_{A_{\infty}}^\alpha\log^{1+\beta}({\rm e}+[w]_{A_{\infty}})[w]_{A_1}\int_{\mathbb{R}^d}\frac{|f(x)|}{\lambda}\log ^{\beta}\Big({\rm e}+\frac{|f(x)|}{\lambda}\Big)w(x)dx.
\end{split}
\end{equation*}
\end{corollary}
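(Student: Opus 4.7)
The plan is to apply Theorem~\ref{thm4.1} with $u = w$ and $D = r'^\alpha$, and to choose the parameters $t,\,r,\,p_1$ carefully as functions of $K := [w]_{A_\infty}$ so that the four factors inside the braces in \eqref{ine:3.2} land at the right orders. First, since $U$ is sublinear we have $|U(f/\lambda)| = |Uf|/\lambda$, so we may rescale and reduce to $\lambda = 1$.

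Set $C_0 = 2^{11+d}$. I would take $t = 1 + 1/(C_0 K)$ so that Lemma~\ref{lem4.3} (combined with $w \in A_1$) gives $M_t w \le 2 M w \le 2[w]_{A_1}\, w$; take $r = 1 + 1/(2 C_0 K)$, which lies in $(1,\,3/2]$ and satisfies $D = r'^\alpha \lesssim K^\alpha$; and take $p_1 = 1 + 1/\log(\mathrm{e} + K)$, so that $p_1' \asymp \log(\mathrm{e} + K)$. The algebraic identity
\[
t\,\frac{p_1'/r - 1}{p_1' - 1} \;=\; \frac{t}{r}\Bigl[\,1 \,-\, \frac{r - 1}{p_1' - 1}\Bigr]
\]
verifies the hypothesis $t(p_1'/r-1)/(p_1'-1) > 1$ of Theorem~\ref{thm4.1} (the factor $t/r > 1$ dominates the small correction) and also yields $\bigl(t(p_1'/r-1)/(p_1'-1)\bigr)' \asymp 1/(t-r) \asymp K$, from which
\[
\Bigl(t\,\frac{p_1'/r-1}{p_1'-1}\Bigr)'^{1/p_1'} \;\lesssim\; K^{1/\log(\mathrm{e} + K)} \;\lesssim\; 1.
\]
Likewise $(p_1'/r)' = p_1' r/(p_1' - r) \lesssim 1$, because $p_1' \gg r$.

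Collecting these four factors gives
\[
D\,p_1'^{1+\beta}\Bigl(\tfrac{p_1'}{r}\Bigr)'\Bigl(t\tfrac{p_1'/r-1}{p_1'-1}\Bigr)'^{1/p_1'} \;\lesssim\; K^\alpha \log^{1+\beta}(\mathrm{e} + K).
\]
Raising to the power $p_1 = 1 + 1/\log(\mathrm{e}+K)$ only introduces a bounded multiplicative constant, since $(p_1 - 1)\bigl[\alpha \log K + (1+\beta)\log\log(\mathrm{e}+K) + O(1)\bigr]$ is uniformly bounded. Plugging into \eqref{ine:3.2}, using $M_t w \lesssim [w]_{A_1}\, w$ inside the integral, and undoing the rescaling by $\lambda$, produces the claimed estimate.

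The main obstacle, and the reason the exponent $\alpha$ appears on $[w]_{A_\infty}$ in the conclusion, is the tight coupling between $r$ and $t$: the constraint $t > r$ combined with the sharp reverse-H\"older threshold $t - 1 \asymp 1/[w]_{A_\infty}$ from Lemma~\ref{lem4.3} forces $r - 1$ to be of the same order, so that $D = r'^\alpha$ inevitably contributes $[w]_{A_\infty}^\alpha$. Everything else is comparatively routine tracking of constants, and the choice $p_1' \asymp \log(\mathrm{e} + [w]_{A_\infty})$ is the standard device for converting the factor $p_1'^{1+\beta}$ into the logarithmic factor $\log^{1+\beta}(\mathrm{e} + [w]_{A_\infty})$ while keeping the outer $p_1$-th power under control.
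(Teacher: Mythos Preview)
Your proposal is correct and follows essentially the same approach as the paper's proof: the same parameter choices $t=1+\frac{1}{2^{11+d}[w]_{A_\infty}}$, $r=(1+t)/2=1+\frac{1}{2\cdot 2^{11+d}[w]_{A_\infty}}$, and $p_1=1+\frac{1}{\log(\mathrm{e}+[w]_{A_\infty})}$, the same invocation of Theorem~\ref{thm4.1} and Lemma~\ref{lem4.3}, and the same tracking of constants. The only cosmetic difference is that the paper bounds $\bigl(t\frac{p_1'/r-1}{p_1'-1}\bigr)'$ via the closed-form identity yielding $\le 5t'$, whereas you use the factorization $\frac{t}{r}\bigl[1-\frac{r-1}{p_1'-1}\bigr]$ to reach the equivalent estimate $\asymp 1/(t-r)$; both give the same $\asymp [w]_{A_\infty}$ bound.
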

\begin{proof}
Let $w\in A_1(\mathbb{R}^d)$. Choose $t=1+\frac{1}{2^{11+d}[w]_{A_{\infty}}}$, $r=(1+t)/2$ and $p_1=1+\frac{1}{\log ({\rm e}+[w]_{A_{\infty}})}$. We apply Theorem \ref{thm4.1} and deduce that $t\frac{p_1'/r-1}{p_1'-1}>1$ and
$$\big(t\frac{p_1'/r-1}{p_1'-1}\big)'=\frac{t(p_1'/r-1)}{t(p_1'/r-1)-p_1'+1}=\frac{t}{t-1}\frac{\frac{p_1'}{r}-1}{\frac{p_1'}{1+t}-1}
=t'\frac{\frac{2p_1'}{1+t}-1}{\frac{p_1'}{1+t}-1}\leq 5t'.
$$Note that $r'=\frac{t+1}{t-1}\le 2^{12+d} [w]_{A_{\infty}}$ and $p_1'\lesssim \log ({\rm e}+[w]_{A_{\infty}})$.
Therefore,
\begin{eqnarray*}\Big\{r'^{\alpha}p_1'^{1+\beta}\big(\frac{p_1'}{r}\big)'\big(t\frac{p_1'/r-1}{p_1'-1}\big)'^{\frac{1}{p_1'}}\Big\}^{p_1}&\lesssim &\big(r'^{\alpha}p_1'^{1+\beta}\big)^{p_1}t'^{p_1-1}\\
&\lesssim&[w]_{A_{\infty}}^{\alpha}\log ^{1+\beta}({\rm e}+[w]_{A_{\infty}}).
\end{eqnarray*}
On the other hand, we know from Lemma \ref{lem4.3} that $M_tw(y)\lesssim [w]_{A_1}w(y)$. This,   via inequality (\ref{ine:3.2}) (with $u=w$)  yields
\begin{equation*}
\begin{split}w(\{&x\in\mathbb{R}^d:\,|Uf(x)|>1\})\\
&\quad\lesssim [w]_{A_1}[w]_{A_{\infty}}^{\alpha}\log ^{1+\beta}({\rm e}+[w]_{A_{\infty}})\int_{\mathbb{R}^d}|f(y)|\log^{\beta}({\rm e}+|f(y)|)w(y)dy,
\end{split}\end{equation*}
which completes the proof of Corollary \ref{cor1}.\end{proof}
\vskip0.24cm

\section{decomposition of the composite operator}
We begin with  an endpoint estimate for composition of sublinear and linear operators.
\begin{theorem}\label{dingli4.1} Let   $U_1$ be a sublinear operator and $U_2$ be a linear operator on $\cup_{p\geq 1}L^p(\mathbb{R}^d)$. Suppose that the following conditions hold
\begin{itemize}
\item[\rm (i)]  $U_1$ is bounded on $L^2(\mathbb{R}^d)$ with bound $1$;
\item[\rm (ii)]  there exists a positive constant  $\beta_1$, such that for  any $\lambda>0$,
\begin{eqnarray}\label{ine:4.1}|\{x\in \mathbb{R}^d:\,|U_1f(x)|>\lambda\}|\le \int_{\mathbb{R}^d}\frac{|f(x)|}{\lambda}\log^{\beta_1}\Big({\rm e}+\frac{|f(x)|}{\lambda}\Big)dx;
\end{eqnarray}
\item[\rm (iii)] for some $q\in (1,\,\frac{3}{2}]$, $U_2$ enjoys a bilinear $(L(\log L)^{\beta_2},\,L^q)$-sparse domination with bound $1$.
\end{itemize}
Then we get that: for any $\lambda>0$,
\begin{eqnarray}\label{eq:4.1}&&|\{x\in \mathbb{R}^d:\,|U_1U_2f(x)|>\lambda\}|\lesssim\int_{\mathbb{R}^d}\frac{|f(x)|}{\lambda}\log^{1+\beta_1+\beta_2}\Big({\rm e}+\frac{|f(x)|}{\lambda}\Big)dx.\end{eqnarray}
\end{theorem}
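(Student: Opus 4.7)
My plan is to normalize to $\lambda=1$ by homogeneity and then perform a Calder\'on--Zygmund-type decomposition of $f$ at level $1$ relative to the Orlicz maximal operator $M_{L(\log L)^{\beta_2}}$, splitting $f=f_1+f_2$ and treating the two pieces with different combinations of the three hypotheses. Using the one-third trick I would select the maximal dyadic cubes $\{Q_k\}$ with $\|f\|_{L(\log L)^{\beta_2},Q_k}>1$ (so $\lesssim 1$ by maximality), set $E=\bigcup_k 4dQ_k$ so that \eqref{eq1.7} yields $|E|\lesssim \int|f|\log^{\beta_2}({\rm e}+|f|)\,dx$ (already absorbed by the target), and take $f_1=f\chi_{\mathbb{R}^d\setminus\bigcup_k Q_k}$ (whence $\|f_1\|_\infty\lesssim 1$ by Lebesgue differentiation) and $f_2=f-f_1$. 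Sublinearity of $U_1$ then reduces the problem to estimating $|\{|U_1U_2f_1|>1\}|$ and $|\{|U_1U_2f_2|>1\}|$ separately.

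For the good piece, hypothesis (i) and the $L^2$-boundedness of $U_2$ (extracted from (iii) via Lemma \ref{lem4.4} with $w=1$, $p=2$, and $t$ chosen large enough to satisfy the threshold in that lemma) yield, via Chebyshev,
\[
|\{|U_1U_2f_1|>1\}|\le \|U_1U_2f_1\|_{L^2}^2\lesssim \|f_1\|_{L^2}^2\le \|f_1\|_\infty\|f_1\|_{L^1}\lesssim \|f\|_{L^1},
\]
which is dominated by the right-hand side of \eqref{eq:4.1}. For the bad piece, hypothesis (ii) applied with $h=U_2f_2$ reduces matters to the modular estimate
\[
\int|U_2f_2|\log^{\beta_1}({\rm e}+|U_2f_2|)\,dx\lesssim \int|f|\log^{1+\beta_1+\beta_2}({\rm e}+|f|)\,dx,
\]
where on the right I would use $|f_2|\le|f|$.

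To prove this last inequality, I would first extract from (iii) and Lemma \ref{lem4.4} (with $w=1$) the sharp $L^p$-norm bound $\|U_2\|_{L^p\to L^p}\lesssim (p')^{1+\beta_2}$ for $p\in(1,2]$; the auxiliary factors $(p'/q)'$ and $\bigl(t(p'/q-1)/(p'-1)\bigr)'^{1/p'}$ are $O(1)$ as $p\to 1^+$ once $t$ is fixed large (e.g., $t=4$ handles all $q\in(1,3/2]$). Then a Yano-type Orlicz extrapolation — decompose $f_2=\sum_k f_2\chi_{A_k}$ with $A_k=\{2^{k-1}<|f_2|\le 2^k\}$, apply Chebyshev at the level-dependent exponent $p_k\approx 1+1/(k+1)$ so that $\|U_2\|_{L^{p_k}\to L^{p_k}}^{p_k}\sim k^{1+\beta_2}$, and sum — produces the required bound, the $k^{1+\beta_2}\sim(\log|f_2|)^{1+\beta_2}$ factor combining with the $\log^{\beta_1}$ modulus on the left to give exactly the $\log^{1+\beta_1+\beta_2}$ power on the right. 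The main obstacle is this extrapolation step: one must carefully optimize the level-dependent exponent $p_k$, control the tail contributions against the $L^2$-stable regime furnished by the endpoint $p=2$, and verify that summing over levels yields precisely the $\log^{1+\beta_1+\beta_2}$ power demanded by \eqref{eq:4.1}.
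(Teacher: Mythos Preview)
Your treatment of the bad piece has a genuine gap: the modular estimate
\[
\int_{\mathbb{R}^d}|U_2f_2(x)|\log^{\beta_1}\bigl({\rm e}+|U_2f_2(x)|\bigr)\,dx\;\lesssim\; \int_{\mathbb{R}^d}|f(x)|\log^{1+\beta_1+\beta_2}\bigl({\rm e}+|f(x)|\bigr)\,dx
\]
is in general \emph{false}, because the left side can be infinite. Yano extrapolation from $\|U_2\|_{L^p\to L^p}\lesssim (p')^{1+\beta_2}$ produces Orlicz mapping properties only over sets of finite measure; on all of $\mathbb{R}^d$ there is no reason for $U_2f_2$ to lie in $L^1$, let alone in $L(\log L)^{\beta_1}$. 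Take $d=1$, $U_2$ the Hilbert transform (which satisfies (iii)), and any $f_2\in L^1$ with $\int f_2\neq 0$ --- exactly the situation produced by your stopping-time construction when $f\ge 0$. Then $U_2f_2(x)\sim (\pi x)^{-1}\int f_2$ as $|x|\to\infty$, so $\int|U_2f_2|\log^{\beta_1}({\rm e}+|U_2f_2|)\,dx=\infty$ while the right side is finite. No choice of level-dependent exponents $p_k$ can repair this: your proposed extrapolation, if it worked, would prove the displayed inequality for every compactly supported $f_2$, and the counterexample shows that is impossible.

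The paper circumvents this by splitting not $f$ but the intermediate function $U_2f$: one writes
\[
|U_1U_2f|\le |U_1(\chi_E\,U_2f)|+|U_1(\chi_{\mathbb{R}^d\setminus E}\,U_2f)|=: {\rm I}_1f+{\rm I}_2f.
\]
For ${\rm I}_1$, hypothesis (ii) reduces to $\int_E |U_2f|\log^{\beta_1}({\rm e}+|U_2f|)\,dx$, and now the domain $E$ has \emph{finite} measure $|E|\lesssim\int|f|\log^{\beta_2}({\rm e}+|f|)$; a layer-cake argument combining the $L(\log L)^{\beta_2}$ weak type of $U_2$ (obtained from Theorem~\ref{thm4.1}) with its $L^2$ boundedness then applies, the term $|E|$ absorbing precisely the small-value tail that blew up in your approach. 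For ${\rm I}_2$, the $L^2$ boundedness of $U_1$ reduces to $\|U_2f\|_{L^2(\mathbb{R}^d\setminus E)}$, which is controlled by dualizing and invoking the sparse form (iii) together with your good/bad split $f=f_1^j+f_2^j$; the restriction to test functions supported off $E$ forces every surviving sparse cube to meet $E^c$, so that $\|f_2^j\|_{L(\log L)^{\beta_2},Q}$ is dominated by the bounded surrogate $f_3^j$. Thus your Calder\'on--Zygmund decomposition of $f$ does appear in the argument, but only inside the bilinear sparse form for ${\rm I}_2$, not applied directly through the composition $U_1U_2$.
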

\begin{proof} Let $f$ be a bounded function and $\mathcal{S}$ be a sparse family of cubes such that for any function $g$,
$$\Big|\int_{\mathbb{R}^n}g(x)U_2f(x)dx\Big|\le \mathcal{A}_{\mathcal{S},\,L(\log L)^{\beta_2},\,L^q}(f,\,g).$$
By the sparseness of $\mathcal{S}$, we get that
\begin{eqnarray*}
\Big|\int_{\mathbb{R}^d}g(x)U_2f(x)dx\Big|
&\lesssim&\int_{\mathbb{R}^n}M_{L(\log L)^{\beta_2}}f(y)M_qf(y)dy\\
&\lesssim&\|f\|_{L^2(\mathbb{R}^n)}\|M_qf\|_{L^2(\mathbb{R}^n)}.
\end{eqnarray*}
Therefore, $U_2$ is bounded on $L^2(\mathbb{R}^n)$ with bound $C_{q}$. On the other hand, we know from Theorem \ref{thm4.1} that  for any $\lambda>0$,
\begin{eqnarray}\label{equation4.3}
|\{x\in \mathbb{R}^d:\,|U_2f(x)|>\lambda\}|\le \int_{\mathbb{R}^d}\frac{|f(x)|}{\lambda}\log^{\beta_2}\Big({\rm e}+\frac{|f(x)|}{\lambda}\Big)dx.
\end{eqnarray}

Since $U_1$ is a sublinear operator and $U_2$ is a linear operator, $\lambda^{-1}|U_1U_2f|=|U_2U_2(\lambda^{-1}f)|$. Therefore it suffices to consider inequality (\ref{eq:4.1}) for $\lambda=1$.
Let $f$   be  a bounded function with compact support, and $\mathcal{S}$ be the sparse family such that (\ref{gongshi3.1}) holds true. Then there exist dyadic grids $\mathscr{D}_1,\,\dots,\,\mathscr{D}_{3^d}$ and sparse families $\mathcal{S}_1,\,\dots,\,\mathcal{S}_{3^d}$, such that for $j=1,\,\dots,\,3^d$,
$\mathcal{S}_j\subset \mathscr{D}_j$, and
$$\mathcal{A}_{\mathcal{S};\,L(\log L)^{\beta_2},\,L^q}(f,\,g)\lesssim\sum_{j=1}^{3^d}\mathcal{A}_{\mathcal{S}_j;\,L(\log L)^{\beta_2},\,L^q}(f,\,g).$$

Now let  $M_{\mathscr{D}_j, L(\log L)^{\beta_2}}$ be the maximal operator defined by (\ref{maximaloperator}).
For each $j=1,\,\dots,\,3^d$, decompose the set $\{x\in\mathbb{R}^d:\,M_{\mathscr{D}_j,L(\log L)^{\beta_2}}f(x)>1\}$ as
$$\{x\in\mathbb{R}^d:\,M_{\mathscr{D}_j,L(\log L)^{\beta_2}}f(x)>1\}=\cup_{k}Q_{jk},$$
with $Q_{jk}$ the maximal cubes in $\mathscr{D}_j$ such that $\|f\|_{L(\log L)^{\beta_2},\,Q_{jk}}>1$. We have that
$$1 <\|f\|_{L(\log L)^{\beta_2},\,Q_{jk}}\lesssim 2^d.$$
Let $E=\cup_{j=1}^{3^d}\cup_{k}16dQ_{jk}$, $f_1^j,\,\, f_2^j$, $f_3^j$ and $\mathcal{S}_j^*$ ($j=1,\,\dots,\,3^d$) be the same as we have done in the proof of Theorem \ref{thm4.1}. Write
$$|U_1U_2f(x)|\leq |U_1(\chi_{E}U_2f\big)(x)|+|U_1(\chi_{\mathbb{R}^d\backslash E}U_2f\big)(x)|=:{\rm I}_1f(x)+{\rm I}_2f(x).$$
Recall that $U_1$ satisfies the estimate (\ref{ine:4.1}), and by the fact \eqref{eq1.7}
$$|E|\lesssim \int_{\mathbb{R}^d}\frac{|f(x)|}{\lambda}
\log^{\beta_2}\Big({\rm e}+\frac{|f(x)|}{\lambda}\Big)dx.$$
It then follows that
\begin{eqnarray*}
&&|\{x\in\mathbb{R}^d:|{\rm I}_1f(x)|>1/2\} |\le \int_{E} |U_2f(x)|\log^{\beta_1} \big({\rm e}+|U_2f(x)|\big)dx\\
&=&\int_{0}^{\infty}\big|\{x\in E:\,|U_2f(x)|>s\}\big|d(s\log^{\beta_1}({\rm e}+s))\\
&\lesssim   &|E|+\int_{{\rm e}^{2\beta_1}}^\infty|\{x\in E: \,|U_{2}f(x)|>2s\}|d(s\log^{\beta_1}({\rm e}+s))\\
&\lesssim& |E|+\int_{{\rm e}^{2\beta_1}}^\infty|\{x\in E: |U_{2}(f\chi_{\{|f|>s\}})(x)|>s\}|d(s\log^{\beta_1}({\rm e}+s))\\
&&\quad+\int_{{\rm e}^{2\beta_1}}^\infty|\{x\in E: |U_{2}(f\chi_{\{|f|\leq s\}})(x)|>s\}|d(s\log^{\beta_1}({\rm e}+s)).
\end{eqnarray*}
We deduce  from  (\ref{equation4.3}) for $U_2$ that
\begin{eqnarray*}
&&\int_{{\rm e}^{2\beta_1}}^\infty|\{x\in E: |U_{2}(f\chi_{\{|f|>s\}})(x)|>s\}|d(s\log^{\beta_1}({\rm e}+s))\\
&&\quad\lesssim\int_{{\rm e}^{2\beta_1}}^\infty\int_{|f(x)|>s }\frac{|f(x)|}{s}\log^{\beta_2}\big({\rm e}+\frac{|f(x)|}{s}\big)dxd(s\log^{\beta_1}({\rm e}+s))\\
&& \quad\lesssim  \int_{\mathbb{R}^d}|f(x)|\log^{\beta_2}({\rm e}+ |f(x)|)\int_{{\rm e}}^{|f(x)|}\frac{1}{s}d(s\log^{\beta_1}({\rm e}+s))dx\\
&&\quad\lesssim \int_{\mathbb{R}^d}|f(x)|\log^{\beta_1+\beta_2+1}({\rm e}+ |f(x)|)dx,
\end{eqnarray*}
Trivial computations leads to that
$$
d(s\log^{\beta_1}({\rm e}+s))\lesssim \frac{1}{s^2}\log^{\beta_1}({\rm e}+s)ds,
$$
and when $s\in [{\rm e}^{2\beta_1},\,\infty)$,
\begin{eqnarray*} -d(\frac{1}{s}\log^{\beta_1}({\rm e}+s))&=&\Big[-\frac{1}{s^2}\log^{\beta_1}({\rm e}+s)-\frac{\beta_1}{s({\rm e}+s)}\log^{\beta_1-1}({\rm e}+s)\Big]ds\\
&\geq &\frac{1}{2s^2}\log^{\beta_1}({\rm e}+s)ds
\end{eqnarray*}
It  follows  from the $L^2(\mathbb{R}^d)$ boundedness of  $U_2$ that
\begin{eqnarray*}
&&\int_{{\rm e}^{2\beta_1}}^\infty|\{x\in E:\, |U_2(f\chi_{\{|f|\leq s \}})(x)|>s\}|d(s\log^{\beta_1}({\rm e}+s))\\
&&\quad\lesssim\int_{{\rm e}^{2\beta_1}}^\infty\frac{1}{s^2}\int_{|f(x)|\leq s}|f(x)|^2dxd(s\log^{\beta_1}({\rm e}+s))\\
&&\quad=\int_{\mathbb{R}^n}|f(x)|^2\int_{\max\{|f(x)|,\,{\rm e}^{2\beta_1}\}}\frac{1}{s^2}\log^{\beta_1}({\rm e}+s)ds\\
&&\quad\lesssim \int_{\mathbb{R}^d}|f(x)|\log^{\beta_1}({\rm e}+ |f(x)|)dx.
\end{eqnarray*}
Therefore, we conclude the estimate of ${\rm I}_1$ as follows
$$|\{x\in\mathbb{R}^d:\,|{\rm I}_1f(x)|>1/2\} |\lesssim  \int_{\mathbb{R}^d}|f(x)|\log^{\beta_1+\beta_2+1}({\rm e}+ |f(x)|)dx.$$

We turn our attention to term ${\rm I}_2f$.  By the $L^2(\mathbb{R}^d)$ boundedness of $U_1$, we know that
\begin{eqnarray*}|\{x\in\mathbb{R}^d:\,|{\rm I}_2f(x)|>1/2\} |&\lesssim & \int_{\mathbb{R}^d\backslash E}|U_2f(x)|^{2}dx\\
&\lesssim& \Big(\sup_{\|g\|_{L^{2}(\mathbb{R}^d\backslash E)\leq 1}}\Big|\int_{\mathbb{R}^d\backslash E}g(x)U_2f(x)dx\Big|\Big)^{2}.
\end{eqnarray*}
For $g\in L^{2}(\mathbb{R}^d\backslash E)$, we can write
\begin{eqnarray*}
\Big|\int_{\mathbb{R}^d\backslash E}g(x)U_2f(x)dx\Big|&\leq &\sum_{j=1}^{3^d}\mathcal{A}_{\mathcal{S}_j,\,L(\log L)^{\beta_2},\,L^q}(f_1^j,\,g)\\
&&+\sum_{j=1}^{3^d}\mathcal{A}_{\mathcal{S}_j,\,L(\log L)^{\beta_2},\,L^q}(f_2^j,\,g).
\end{eqnarray*}
Recall that if ${\rm supp}\,g\subset \mathbb{R}^d\backslash E$, then
$$\mathcal{A}_{\mathcal{S}_j, L(\log L)^{\beta_2},\,L^q}(f_2^j,\,g)=\mathcal{A}_{\mathcal{S}^*_j, L(\log L)^{\beta_2},\,L^q}(f_2^j,\,g)\lesssim \mathcal{A}_{\mathcal{S}_j, L(\log L)^{\beta_2},\,L^q}(f_3^j,\,g).$$
On the other hand, the sparseness of $\mathcal{S}_j$  states that
\begin{eqnarray*}
\mathcal{A}_{\mathcal{S}_j, L(\log L)^{\beta_2},\,L^q}(f_3^j,\,g)&\lesssim &\int_{\mathbb{R}^d}
M_{L(\log L)^{\beta_2}}f_3^j(x)M_qg(x)dx\\
&\lesssim &\|M_{L(\log L)^{\beta_2}}f_3^j\|_{L^2(\mathbb{R}^d)}\|M_qg\|_{L^2(\mathbb{R}^d)}\\
&\lesssim&\|f_3^j\|_{L^2(\mathbb{R}^d)}\|g\|_{L^2(\mathbb{R}^d)}.
\end{eqnarray*}
Since $\|f_1^j\|_{L^1(\mathbb{R}^d)}\le \|f\|_{L^1(\mathbb{R}^d)}$, and \begin{eqnarray*}
\|f_3^j\|_{L^1(\mathbb{R}^d)}
\lesssim \int_{\mathbb{R}^d}|f(y)|\log^{\beta_2} ({\rm e}+|f(y)|){d}y,
\end{eqnarray*}
we finally obtain that
\begin{eqnarray*}
|\{x\in\mathbb{R}^d:\,|{\rm I}_2f(x)|>1/2\} |&\lesssim &\Big(\sum_{j=1}^{3^d}(\|f_1^j\|_{L^2(\mathbb{R}^d)}+\|f_3^j\|_{L^2(\mathbb{R}^d)})\Big)^2\\
&\lesssim&\Big(\sum_{j=1}^{3^d}(\|f_1^j\|_{L^1(\mathbb{R}^d)}^{\frac{1}{2}}+\|f_3^j\|_{L^2(\mathbb{R}^d)}^{\frac{1}{2}})\Big)^2\\
&\lesssim&\int_{\mathbb{R}^d}|f(y)|\log^{\beta_2} ({\rm e}+|f(y)|){\rm d}y.
\end{eqnarray*}
Combining estimates for ${\rm I}$ and  ${\rm II}$ completes the proof of Theorem \ref{dingli4.1}.
\end{proof}

For  a linear operator $T$, we define the corresponding  grand maximal  operator $\mathscr{M}_{T,r}$  by
$$\mathscr{M}_{T,\,r}f(x)=\sup_{Q\ni x}|Q|^{-\frac{1}{r}}\|T(f\chi_{\mathbb{R}^d\backslash 3Q})\chi_{Q}\|_{L^{r}(\mathbb{R}^d)},$$
where the supremum is taken over all cubes $Q\subset \mathbb{R}^d$ containing $x$.
$\mathscr{M}_{T,\,r}$ was introduced by Lerner \cite{ler4} and is useful in establishing bilinear sparse domination of rough operator $T_{\Omega}$. Let $T_{1}$, $T_{2}$ be two linear operators.   We define the grand maximal operator $\mathscr{M}_{T_{1}T_{2}, r}^{*}$ by
$$\mathscr{M}_{T_1T_2,\,r}^{*}f(x)=\sup_{Q\ni x}\Big(\frac{1}{|Q|}\int_{Q}|T_1\big(\chi_{\mathbb{R}^d\backslash 3Q}
T_2(f\chi_{\mathbb{R}^d\backslash 9Q})\big)(\xi)|^rd\xi\Big)^{\frac{1}{r}}.$$
\begin{lemma}\label{yinli4.1}
Let $s\in [0,\,\infty)$ and $A\in (1,\,\infty)$, $S$ be a sublinear operator which satisfies that for any $\lambda>0$,
$$\big|\{x\in \mathbb{R}^d:\, |Sf(x)|>\lambda\}\big|\lesssim \int_{\mathbb{R}^d}\frac{A|f(x)|}{\lambda}\log^s\Big({\rm e}+\frac{A|f(x)|}{\lambda}\Big)dx.
$$
Then for any $\varrho\in (0,\,1)$ and cube $Q\subset \mathbb{R}^d$,$$
\Big(\frac{1}{|Q|}\int_{Q}|S(f\chi_{Q})(x)|^{\varrho}dx\Big)^{\frac{1}{\varrho}}\lesssim A\|f\|_{L(\log L)^{s},\,Q}.$$
\end{lemma}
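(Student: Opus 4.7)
The plan is to use the layer-cake formula together with Kolmogorov-type truncation, splitting the level $\lambda$ at the natural threshold $\lambda_0:=A\|f\|_{L(\log L)^s,\,Q}$. More precisely, I would start from
$$
\frac{1}{|Q|}\int_{Q}|S(f\chi_Q)(x)|^{\varrho}dx
=\frac{\varrho}{|Q|}\int_{0}^{\infty}\lambda^{\varrho-1}\big|\{x\in Q:\,|S(f\chi_Q)(x)|>\lambda\}\big|d\lambda
$$
and split the outer integral into $\int_{0}^{\lambda_0}$ and $\int_{\lambda_0}^{\infty}$. On $(0,\lambda_0]$ the trivial bound $|\{\cdots\}|\le |Q|$ gives a contribution equal to $\lambda_0^{\varrho}$; the rest of the argument is devoted to showing that the tail integral is controlled by a $\varrho$-dependent multiple of the same quantity.

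On $[\lambda_0,\infty)$ I would apply the hypothesized weak-type estimate for $S$ to get
$$
\frac{\varrho}{|Q|}\int_{\lambda_0}^{\infty}\lambda^{\varrho-1}\int_{Q}\frac{A|f(x)|}{\lambda}\log^{s}\!\Big({\rm e}+\frac{A|f(x)|}{\lambda}\Big)dx\,d\lambda,
$$
and then swap the order of integration. Since $\lambda\mapsto\log^{s}({\rm e}+A|f(x)|/\lambda)$ is decreasing in $\lambda$, I would pull it out of the inner integral by replacing $\lambda$ with $\lambda_0$, reducing the $\lambda$-integral to $\int_{\lambda_0}^{\infty}\lambda^{\varrho-2}d\lambda=\lambda_0^{\varrho-1}/(1-\varrho)$. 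This leaves the factor
$$
\frac{A}{|Q|}\int_{Q}|f(x)|\log^{s}\!\Big({\rm e}+\frac{A|f(x)|}{\lambda_0}\Big)dx,
$$
which, by the very definition of the Luxemburg norm $\|f\|_{L(\log L)^{s},\,Q}$ with the choice $\lambda_0=A\|f\|_{L(\log L)^{s},\,Q}$, is bounded by $\lambda_0$. Putting these pieces together bounds the tail by $\tfrac{\varrho}{1-\varrho}\lambda_0^{\varrho}$, hence $\tfrac{1}{|Q|}\int_{Q}|S(f\chi_Q)|^{\varrho}\lesssim_{\varrho}\lambda_0^{\varrho}$, and taking $\varrho$-th roots gives the stated conclusion.

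The argument is essentially the standard Kolmogorov reduction of an $L^{\varrho}$ average ($\varrho<1$) to a weak-type norm, adapted to a nonlinear Orlicz weak estimate. The only slightly delicate point is the legitimacy of pulling the $\log^{s}$ factor out of the $\lambda$-integral at the threshold $\lambda_0$; this is what fixes the right power of the Luxemburg norm and what forces the particular choice of $\lambda_0$. Once this monotonicity step is handled, the remaining computations are routine and the dependence on $\varrho$ appears only through the harmless constant $\varrho/(1-\varrho)$, which is admissible since $\varrho\in(0,1)$ is fixed.
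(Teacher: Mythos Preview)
Your proof is correct and follows essentially the same route as the paper's. The paper normalizes by homogeneity to $\|f\|_{L(\log L)^s,Q}=1$ and then splits the layer-cake integral at $t=A$, using exactly the same trivial bound on the head and the same monotonicity-of-$\log^s$ step on the tail; your version simply keeps the threshold $\lambda_0=A\|f\|_{L(\log L)^s,Q}$ explicit rather than normalizing first.
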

\begin{proof}
Lemma \ref{yinli4.1} was proved essentially in \cite[p. 643]{huli}. We present the proof here mainly to make clear the bound. By homogeneity, we may assume that $\|f\|_{L(\log L)^{s},\,Q}=1$, which means that
$$\int_{Q}|f(x)|\log^s({\rm e}+|f(x)|)dx\leq |Q|.$$
A trivial computation leads to that
\begin{eqnarray*}
\int_{Q}|S(f\chi_{Q})(x)|^{\varrho}dx&=&\int_{0}^{A}|\{x\in Q:\,|S(f\chi_{Q})(x)|>t\}|t^{\varrho-1}dt\\
&&+\int_{A}^{\infty}|\{x\in \mathbb{R}^d:\,|S(f\chi_{Q})(x)|>t\}|t^{\varrho-1}dt\\
&\lesssim&|Q|A^{\varrho}+\int_{A}^{\infty}\int_{Q}\frac{A|f(x)|}{t}\log^s \Big({\rm e}+\frac{A|f(x)|}{t}\Big)dxt^{\varrho-1}dt\\
&\lesssim&|Q|A^{\varrho}.
\end{eqnarray*}
This gives the desired conclusion and completes the proof of Lemma \ref{yinli4.1}.
\end{proof}
\begin{lemma}\label{yinli4.2}Let $T_{1}$, $T_{2}$ be two linear operators. Suppose that for some $\alpha,\,\beta\in [0,\,\infty)$ and $r\in (1,\,2]$,
\begin{equation}\label{eq3.1xx}
|\{x\in\mathbb{R}^d:\,\mathscr{M}_{T_1,\,r}T_2f(x)>t\}|\lesssim \int_{\mathbb{R}^d}\frac{r^{\alpha}|f(x)|}{t}\log^{\beta} \Big(
{\rm e}+\frac{r^{\alpha}|f(x)|}{t}\Big)dx.\end{equation}
Then
$$
|\{x\in\mathbb{R}^d:\,\mathscr{M}_{T_1T_2,\,r}^*f(x)>t\}|\lesssim \int_{\mathbb{R}^d}\frac{r^{\alpha}|f(x)|}{t}\log^{\beta} \Big(
{\rm e}+\frac{r^{\alpha}|f(x)|}{t}\Big)dx.$$
\end{lemma}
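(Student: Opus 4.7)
The plan is to decompose $\mathscr{M}^*_{T_1T_2,r}f$ into the operator $\mathscr{M}_{T_1,r}(T_2f)$, for which the hypothesis gives the desired bound, plus a localized tail operator that can be absorbed into $r^{\alpha}M_{L(\log L)^{\beta}}f$. For any cube $Q\ni x$, write $f=f\chi_{9Q}+f\chi_{\mathbb{R}^d\setminus 9Q}$ and use the linearity of $T_2$, the sublinearity of $T_1$, and Minkowski's inequality in $L^{r}(Q,|Q|^{-1}dy)$ to obtain
$$\Big(\frac{1}{|Q|}\int_{Q}|T_1(\chi_{\mathbb{R}^d\setminus 3Q}T_2(f\chi_{\mathbb{R}^d\setminus 9Q}))|^{r}\Big)^{\frac{1}{r}}\leq \Big(\frac{1}{|Q|}\int_{Q}|T_1(\chi_{\mathbb{R}^d\setminus 3Q}T_2f)|^{r}\Big)^{\frac{1}{r}}+ \Big(\frac{1}{|Q|}\int_{Q}|T_1(\chi_{\mathbb{R}^d\setminus 3Q}T_2(f\chi_{9Q}))|^{r}\Big)^{\frac{1}{r}}.$$
Taking the supremum over cubes $Q\ni x$ yields the pointwise bound $\mathscr{M}^*_{T_1T_2,r}f(x)\leq \mathscr{M}_{T_1,r}(T_2f)(x)+Rf(x)$, where $Rf(x)$ denotes the second supremum.

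The key step is to control $Rf$ using Lemma~\ref{yinli4.1}. The composition $S:=\mathscr{M}_{T_1,r}T_2$ is sublinear, and assumption \eqref{eq3.1xx} is precisely the hypothesis of Lemma~\ref{yinli4.1} with $A=r^{\alpha}$ and exponent $s=\beta$. Thus for any $\varrho\in(0,1)$ and any cube $Q$,
$$\Big(\frac{1}{|9Q|}\int_{9Q}|\mathscr{M}_{T_1,r}T_2(f\chi_{9Q})(y)|^{\varrho}dy\Big)^{\frac{1}{\varrho}}\lesssim r^{\alpha}\|f\|_{L(\log L)^{\beta},\,9Q}.$$
The bridge between this and the $L^{r}$ average appearing in $Rf$ is the elementary pointwise inequality: for each $\xi\in Q$, taking the cube $Q$ itself as competitor in the definition of $\mathscr{M}_{T_1,r}$ gives
$$\mathscr{M}_{T_1,r}T_2(f\chi_{9Q})(\xi)\geq \Big(\frac{1}{|Q|}\int_{Q}|T_1(\chi_{\mathbb{R}^d\setminus 3Q}T_2(f\chi_{9Q}))|^{r}\Big)^{\frac{1}{r}}.$$
Since the right-hand side is independent of $\xi$, taking $\varrho$-mean over $\xi\in Q$ and using $Q\subset 9Q$ with a loss of only a dimensional constant $9^{d/\varrho}$ yields
$$\Big(\frac{1}{|Q|}\int_{Q}|T_1(\chi_{\mathbb{R}^d\setminus 3Q}T_2(f\chi_{9Q}))|^{r}\Big)^{\frac{1}{r}}\lesssim r^{\alpha}\|f\|_{L(\log L)^{\beta},\,9Q},$$
and consequently $Rf(x)\lesssim r^{\alpha}M_{L(\log L)^{\beta}}f(x)$.

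It then only remains to combine the two pieces: by the hypothesis \eqref{eq3.1xx} for the first term and by the weak-type bound \eqref{eq1.7} for $M_{L(\log L)^{\beta}}$ applied to the second term,
$$|\{\mathscr{M}^*_{T_1T_2,r}f>t\}|\leq |\{\mathscr{M}_{T_1,r}T_2f>t/2\}|+|\{Rf>t/2\}|\lesssim \int_{\mathbb{R}^d}\frac{r^{\alpha}|f(x)|}{t}\log^{\beta}\Big(\mathrm{e}+\frac{r^{\alpha}|f(x)|}{t}\Big)dx,$$
which is the desired conclusion. The main conceptual obstacle is spotting the bridging step: one must recognize that although $\mathscr{M}^*_{T_1T_2,r}$ is defined via an $L^{r}$ average with $r\in(1,2]$, this average can be dominated by an $L^{\varrho}$ average of $\mathscr{M}_{T_1,r}T_2(f\chi_{9Q})$ for any $\varrho<1$, which is exactly the regime where Lemma~\ref{yinli4.1} converts the weak-type hypothesis into an $L(\log L)^{\beta}$ norm bound.
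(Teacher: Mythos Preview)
Your proof is correct and follows the same overall strategy as the paper: split $f=f\chi_{9Q}+f\chi_{\mathbb{R}^d\setminus 9Q}$, control the local piece through Lemma~\ref{yinli4.1} applied to $S=\mathscr{M}_{T_1,r}T_2$ (yielding the $r^{\alpha}M_{L(\log L)^{\beta}}f$ term), and handle the global piece via the hypothesis~\eqref{eq3.1xx}. The bridging inequality you identify---that the $L^r$ average inside $Rf$ is a lower bound for $\mathscr{M}_{T_1,r}T_2(f\chi_{9Q})(\xi)$ for every $\xi\in Q$---is exactly the observation the paper uses as well.

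The one genuine difference is in how the global piece is treated. You split directly at the level of the $L^r$ average via Minkowski, obtaining the pointwise bound $\mathscr{M}^*_{T_1T_2,r}f(x)\le \mathscr{M}_{T_1,r}T_2f(x)+Rf(x)$, so the first term is handled immediately by~\eqref{eq3.1xx}. The paper instead first passes to $\inf_{\xi\in Q}\mathscr{M}_{T_1,r}\big(T_2(f\chi_{\mathbb{R}^d\setminus 9Q})\big)(\xi)$, and because the infimum of a sum is not the sum of infima, it replaces the infimum by a $\tau$-mean ($\tau\in(0,1)$) before splitting. This produces $M_{\tau}\big(\mathscr{M}_{T_1,r}T_2f\big)(x)$ rather than $\mathscr{M}_{T_1,r}T_2f(x)$, and then an additional Kolmogorov-type inequality (cited as inequality~(11) in \cite{huli}) is needed to pass from the weak-type bound on $\mathscr{M}_{T_1,r}T_2f$ to one on $M_{\tau}\mathscr{M}_{T_1,r}T_2f$. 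Your route avoids this detour entirely and is therefore slightly more elementary, at no cost.
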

\begin{proof}
Let  $\tau\in (0,\,1)$, $x\in\mathbb{R}^d$ and $Q\subset \mathbb{R}^d$ be a cube containing $x$.   We know by (\ref{eq3.1xx}) and Lemma \ref{yinli4.1} that
$$\Big(\frac{1}{|Q|}\int_Q\big[\mathscr{M}_{T_{1},\,r}T_{2}(f\chi_{9Q})(\xi)\big]^{\tau}d\xi\Big)^{\frac{1}{\tau}}\lesssim r^{\alpha} M_{L(\log L)^{\beta}}f(x).
$$
A straightforward  computation  leads to that
\begin{equation*}
\begin{split}
&\Big[\frac{1}{|Q|}\int_{Q}|T_{1}\big(\chi_{\mathbb{R}^d\backslash 3Q}T_{2}(f\chi_{\mathbb{R}^d\backslash 9Q})\big)(\xi)|^rd\xi\Big]^{\frac{1}{r}}\leq \inf_{\xi\in Q}\mathscr{M}_{T_{1},\,r}\big(T_{2}(f\chi_{\mathbb{R}^d\backslash 9Q})\big)(\xi)\\
&\quad\lesssim\Big(\frac{1}{|Q|}\int_Q\big[\mathscr{M}_{T_{1},\,r}T_{2}f(\xi)\big]^{\tau}d\xi\Big)^{\frac{1}{\tau}}
+\Big(\frac{1}{|Q|}\int_Q\big[\mathscr{M}_{T_{1},\,r}\big(T_{2}(f\chi_{9Q})\big)(\xi)\big]^{\tau}d\xi\Big)^{\frac{1}{\tau}}\\
&\quad\lesssim M_{\tau}\mathscr{M}_{T_{1},\,r}T_{2}f(x)+r^{\alpha}M_{L(\log L)^{\beta}}f(x).
\end{split}
\end{equation*}
On the other hand, we have
\begin{equation*}
\begin{split}
&\big|\{x\in\mathbb{R}^d:\,M_{\tau}\mathscr{M}_{T_{1},\,r}T_{2}f(x)>\lambda\}\big|\\
&\quad\quad\lesssim\lambda^{-1}\sup_{t\geq 2^{-1/\tau}\lambda}t|\{x\in\mathbb{R}^d:\,\mathscr{M}_{T_{1},\,r}T_{2}f(x)>t\}|\\
&\quad\quad\lesssim\int_{\mathbb{R}^d}\frac{r^{\alpha}|f(x)|}{\lambda}\log^{\beta} \Big({\rm e}+\frac{r^{\alpha}|f(x)|}{\lambda}\Big)dx,
\end{split}\end{equation*}
where the first inequality follows from inequality (11) in \cite{huli}. This, along with (\ref{eq1.7}) gives us the desired conclusion.
\end{proof}

\begin{theorem}\label{dingli4.2} Let $T_1,\,T_2$ be two linear operators, $r\in (1,\,3/2]$, $\beta_1,\,\beta_2,\,\gamma\in [0,\,\infty)$. Suppose that the following conditions hold
\begin{itemize}
\item[\rm (i)] $T_1$,  is bounded on  $L^{r'}(\mathbb{R}^d)$  with bound  $A$;
\item [\rm (ii)] for each $\lambda>0$,
\begin{eqnarray*}&&|\{x\in\mathbb{R}^d:\, |T_1T_2f(x)|>\lambda\}|\lesssim \int_{\mathbb{R}^d}\frac{A_0|f(x)|}{\lambda}\log^{\beta_1}\Big({\rm e}+\frac{A_0|f(x)|}{\lambda}\Big)dx;\end{eqnarray*}
\item [\rm (iii)] for each $\lambda>0$,
\begin{equation*}
|\{x\in\mathbb{R}^d:\, \mathscr{M}_{T_1,\,r'}T_2f(x)>\lambda\}|\lesssim \int_{\mathbb{R}^d}\frac{A_1|f(x)|}{\lambda}\log^{\beta_1}\Big({\rm e}+\frac{A_1|f(x)|}{\lambda}\Big)dx,
\end{equation*}
and
\begin{eqnarray*}
|\{x\in\mathbb{R}^d:\,\mathscr{M}_{T_2,\,r'}f(x)>\lambda\}|\lesssim \int_{\mathbb{R}^d}\frac{A_2|f(x)|}{\lambda}\log^{\beta_2}\Big({\rm e}+\frac{A_2|f(x)|}{\lambda}\Big)dx.
\end{eqnarray*}
\end{itemize}Then for a bounded function $f$ with compact support, there exists a $\frac{1}{2}\frac{1}{9^d}$-sparse family of cubes $\mathcal{S}=\{Q\}$,
and functions $H_1$ and $H_2$, such that for each   function $g$,
$$\Big|\int_{\mathbb{R}^n}H_1(x)g(x)dx\Big|\lesssim (A_0+A_1)\mathcal{A}_{\mathcal{S};\,L(\log L)^{\beta_1},\,L^r}(f,\,g),$$
$$\Big|\int_{\mathbb{R}^n}H_2(x)g(x)dx\Big|\lesssim AA_2\mathcal{A}_{\mathcal{S};\,L(\log L)^{\beta_2},\,L^r}(f,\,g),$$
and for a. e. $x\in\mathbb{R}^n$,
$$T_{1} T_2f(x)= H_1(x)+H_2(x).
$$
\end{theorem}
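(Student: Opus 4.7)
The plan is to perform a Lerner-style iterated stopping-time construction, splitting the composite $T_1T_2$ at every stopping step via the three-term identity
$$T_1T_2(f\chi_{9Q})=T_1T_2(f\chi_{9P})+T_1\bigl(\chi_{3P}T_2(f\chi_{9Q\setminus9P})\bigr)+T_1\bigl(\chi_{\mathbb{R}^d\setminus3P}T_2(f\chi_{9Q\setminus9P})\bigr),$$
valid for $P\subset Q$. The first summand will iterate; the middle, bounded by $\|T_1(\chi_{3P}T_2(\cdot))\|_{L^{r'}(P)}\le A\|T_2(\cdot)\|_{L^{r'}(3P)}$ (by (i)) and then by the grand maximal of $T_2$, becomes $H_2$ with constant $AA_2$ and exponent $\beta_2$; the third, controlled by $\mathscr{M}^{*}_{T_1T_2,r'}$ with constant $A_1$ and exponent $\beta_1$, becomes part of $H_1$, together with the residual $T_1T_2(f\chi_{9Q})$ term on the good set, which is controlled by (ii) with constant $A_0$ and exponent $\beta_1$.

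First I would apply Lemma \ref{yinli4.2} (with its parameter taken to be $r'$) to hypothesis (iii) to produce a weak-type $L(\log L)^{\beta_1}$ estimate for $\mathscr{M}^{*}_{T_1T_2,r'}$ with constant $\lesssim A_1$. Combining with Lemma \ref{yinli4.1} yields, on every cube $Q$ and for small $\varrho\in(0,1)$, the three local averaged bounds
$$\Bigl(\tfrac{1}{|Q|}\int_Q|T_1T_2(f\chi_{9Q})|^\varrho\Bigr)^{1/\varrho}\lesssim A_0\|f\|_{L(\log L)^{\beta_1},9Q},$$
$$\Bigl(\tfrac{1}{|Q|}\int_Q|\mathscr{M}^{*}_{T_1T_2,r'}(f\chi_{9Q})|^\varrho\Bigr)^{1/\varrho}\lesssim A_1\|f\|_{L(\log L)^{\beta_1},9Q},$$
and the analogue for $\mathscr{M}_{T_2,r'}(f\chi_{9Q})$ with constant $A_2$ and exponent $\beta_2$.

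Next, for a fixed cube $Q_0$ with $\mathrm{supp}\,f\subset Q_0$, I would build the sparse family recursively. Given $Q$, let $\mathcal{F}(Q)$ be the maximal dyadic subcubes $P\subset Q$ on which at least one of the following exceeds a fixed large multiple of its value on $Q$: the $L(\log L)^{\beta_1}$- and $L(\log L)^{\beta_2}$-Luxemburg norms of $f\chi_{9Q}$, and the $L^\varrho$-averages on $P$ of $\mathscr{M}^{*}_{T_1T_2,r'}(f\chi_{9Q})$ and $\mathscr{M}_{T_2,r'}(f\chi_{9Q})$. Chebyshev's inequality combined with the four local controls forces $|\bigcup_{P\in\mathcal{F}(Q)}P|\le\tfrac{1}{2\cdot 9^d}|Q|$ when the threshold constants are chosen large enough, so iterating from $\{Q_0\}$ produces the desired $\tfrac{1}{2\cdot 9^d}$-sparse family $\mathcal{S}$. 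On the good set $E_Q=Q\setminus\bigcup\mathcal{F}(Q)$ all four quantities are, in the $L^\varrho$-sense, bounded by the corresponding value on $Q$.

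Finally I would assemble $H_1$ and $H_2$ by distributing, at every level of the recursion, the three summands of the identity according to the scheme above; since $f\in L^\infty$ has compact support, the iteration terminates in an $L^\varrho$ sense and $T_1T_2 f=H_1+H_2$ a.e. Testing against a function $g$ via H\"older with exponents $(r,r')$ on each $P\in\mathcal{F}(Q)$, the four stopping-time bounds yield
$$|P|^{-1/r'}\|H_1\chi_P\|_{L^{r'}(P)}\lesssim(A_0+A_1)\|f\|_{L(\log L)^{\beta_1},Q},$$
$$|P|^{-1/r'}\|H_2\chi_P\|_{L^{r'}(P)}\lesssim AA_2\|f\|_{L(\log L)^{\beta_2},Q};$$
summing over $\mathcal{S}$ produces the two required sparse dominations. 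The main obstacle is the bookkeeping in this last step: one must verify that the allocation of the three summands to $H_1$ versus $H_2$ at successive stopping levels is internally consistent, that the telescoping identity holds pointwise a.e.\ and not merely formally (controlled at depth $k$ by the $L^{r'}$-boundedness of $T_1$ from (i) together with the compact support of $f$), and crucially that a single sparse family $\mathcal{S}$ simultaneously witnesses both sparse bounds.
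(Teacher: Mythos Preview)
Your overall architecture is the same as the paper's: the three-term identity, the allocation of the middle summand to $H_2$ (controlled by (i) and $\mathscr{M}_{T_2,r'}$, giving $AA_2$ and exponent $\beta_2$) and of the outer summand plus the good-set residual to $H_1$ (controlled by $\mathscr{M}^*_{T_1T_2,r'}$ and (ii), giving $A_0+A_1$ and exponent $\beta_1$), and the recursion. But the stopping mechanism you describe has two concrete gaps.

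First, your list of stopping conditions omits any control on $T_1T_2(f\chi_{9Q})$ itself. You write that the residual piece $T_1T_2(f\chi_{9Q})\chi_{E_Q}$ is ``controlled by (ii) with constant $A_0$'', but none of your four stopping quantities (the two Luxemburg norms of $f$ and the $L^\varrho$-averages of the two grand maximals) bounds $|T_1T_2(f\chi_{9Q})(x)|$ pointwise on the good set $E_Q$; the grand maximal $\mathscr{M}^*_{T_1T_2,r'}$ does not dominate $T_1T_2$. In the paper this is handled by including, as a third component of the exceptional set, the honest level set
\[
E_1=\bigl\{x\in Q_0:\,|T_1T_2(f\chi_{9Q_0})(x)|>DA_0\|f\|_{L(\log L)^{\beta_1},9Q_0}\bigr\},
\]
so that on $Q_0\setminus\bigcup_l P_l\subset E_1^c$ one has the pointwise bound for free. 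Without it, the constant $A_0$ never enters and the good-set part of $H_1$ is uncontrolled.

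Second, stopping on $L^\varrho$-averages rather than on pointwise level sets does not deliver the $L^{r'}(P)$ bounds you claim at the end. For the $H_1$ contribution on a stopping cube $P$ you need
\[
|P|^{-1/r'}\bigl\|T_1\bigl(\chi_{\mathbb{R}^d\setminus 3P}T_2(f\chi_{9Q\setminus 9P})\bigr)\bigr\|_{L^{r'}(P)}\le \inf_{\xi\in P}\mathscr{M}^*_{T_1T_2,r'}f(\xi)\lesssim A_1\|f\|_{L(\log L)^{\beta_1},9Q},
\]
and analogously $\inf_P\mathscr{M}_{T_2,r'}\lesssim A_2\|f\|_{L(\log L)^{\beta_2},9Q}$ for $H_2$. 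From your stopping rule, maximality only says that the parent $\hat P$ has small $L^\varrho$-average of $\mathscr{M}^*$, which yields a point in $\hat P$ (not in $P$) where $\mathscr{M}^*$ is small; this does not bound $\inf_P\mathscr{M}^*$. The paper avoids this by taking $E=E_1\cup E_2\cup E_3$ as a union of three \emph{pointwise} level sets of $T_1T_2(f\chi_{9Q_0})$, $\mathscr{M}_{T_2,r';Q_0}$, and $\mathscr{M}^*_{T_1T_2,r';Q_0}$, showing $|E|\le 2^{-d-2}|Q_0|$ via the weak-type hypotheses, and then Calder\'on--Zygmund decomposing $\chi_E$ at level $2^{-d-1}$; each selected $P_l$ then satisfies $P_l\cap E^c\neq\emptyset$, which immediately gives the required infimum bounds. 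Your two Luxemburg-norm stopping conditions are then unnecessary and play no role.
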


\begin{proof}
We will employ the argument in \cite{ler4}, together with some ideas in \cite{hu3}. For a fixed cube $Q_0$, define the  local analogy of $\mathscr{M}_{T_2,\,r'}$ and $\mathscr{M}^*_{T_{1}T_2,\,r'}$  by
$$ \mathscr{M}_{T_{2};\,r' ;\,Q_0}f(x)=\sup_{Q\ni x,\, Q\subset Q_0}|Q|^{-\frac{1}{r'}}\|\chi_{Q}T_{2}(f\chi_{3Q_0\backslash 3Q})\|_{L^{r'}(\mathbb{R}^d)},$$
and
$$\mathscr{M}_{T_{1}T_{2},\,r';\,Q_0}^{*}f(x)=\sup_{Q\ni x, Q\subset Q_0}\Big(\frac{1}{|Q|}\int_{Q}|T_{1}\big(\chi_{\mathbb{R}^d\backslash 3Q}
T_{2}(f\chi_{9Q_0\backslash 9Q})\big)(\xi)|^{r'}d\xi\Big)^{\frac{1}{r'}}$$respectively. Let $E=\cup_{j=1}^3E_j$ with
$$E_1=\big\{x\in Q_0:\, |T_{1}T_2(f\chi_{9Q_0})(x)|>DA_0\|f\|_{L(\log L)^{\beta_1},\,9Q_0}\big\},$$
$$E_2=\{x\in Q_0:\,\mathscr{M}_{T_2,\,r';\,Q_0}f(x)>DA_2\|f\|_{L(\log L)^{\beta_2},\,9Q_0}\},$$
$$E_3=\big\{x\in Q_0:\, \mathscr{M}^*_{T_{1}T_2,\,r';\,Q_0}f(x)>DA_1\|f\|_{L(\log L)^{\beta_1},9Q_0}\big\},$$
with $D$ a positive constant. Our hypothesis, vie Lemma \ref{yinli4.2}   tells us  that
$$|E|\le \frac{1}{2^{d+2}}|Q_0|,$$
if we choose $D$ large enough. Now on the cube $Q_0$, we apply the Calder\'on-Zygmund decomposition to $\chi_{E}$ at level $\frac{1}{2^{d+1}}$, and obtain pairwise disjoint cubes $\{P_j\}\subset \mathcal{D}(Q_0)$, such that
$$\frac{1}{2^{d+1}}|P_j|\leq |P_j\cap E|\leq \frac{1}{2}|P_j|$$
and $|E\backslash\cup_jP_j|=0$.  Observe that $\sum_j|P_j|\leq \frac{1}{2}|Q_0|$.
Let
\begin{eqnarray*}
G_1(x)&=&T_{1} T_2(f\chi_{9Q_0})(x)\chi_{Q_0\backslash \cup_{l}P_l}(x)\\
&&+\sum_lT_{1}\Big(\chi_{\mathbb{R}^n\backslash 3P_l}T_{2}(f\chi_{9Q_0\backslash 9P_l})\Big)(x)\chi_{P_l}(x).
\end{eqnarray*}
The facts that $P_l\cap E^c\not =\emptyset$ and $|E\backslash\cup_jP_j|=0$ imply that for any $g\in L^r(\mathbb{R}^d)$,
\begin{eqnarray}\label{eq3.9}
\qquad\Big|\int_{\mathbb{R}^d}G_1(x)g(x)dx\Big|&\lesssim &\int_{\mathbb{R}^d}|T_{1}T_2(f\chi_{9Q_0})(x)g(x)|\chi_{Q_0\backslash \cup_{l}P_l}(x)dx\\
&+&\sum_{l}\inf_{\xi\in P_l}|P_l|^{\frac{1}{r'}}\mathscr{M}_{T_1T_{2};Q_0,r'}^*f(\xi)\Big(\int_{P_l}|g(x)|^rdx\Big)^{\frac{1}{r}}\nonumber\\
&\lesssim&(A_0+A_1)\|f\|_{L(\log L)^{\beta_1},\,9Q_0}\langle|g|\rangle_{r,\,Q_0}|Q_0|.\nonumber\end{eqnarray}
Also, we define function  $G_2$ by

$$G_2(x)=\sum_lT_{1}\big(\chi_{3P_l}T_2\big(f\chi_{9Q_0\backslash 9P_l})\big)(x)\chi_{P_l}(x).
$$
For each function $g$, we have by H\"older's inequality that
\begin{eqnarray}\label{eq3.11}
&&\Big|\int_{\mathbb{R}^d}G_{2}(x)g(x)dx\Big|\\
&&\quad\le A\sum_l\Big(\int_{3P_l}\big|T_2\big(f\chi_{9Q_0\backslash 9P_l}\big)(x)\big|^{r'}dx\Big)^{\frac{1}{r'}}\Big(\int_{P_l}|g(y)|dy\Big)^{\frac{1}{r}}\nonumber\\
&&\quad\lesssim A\sum_l|P_l|^{\frac{1}{r'}}\inf_{\xi\in P_l}\mathscr{M}_{T_2,\,r';\,Q_0}f(\xi)\Big(\int_{P_l}|g(y)|dy\Big)^{\frac{1}{r}}\nonumber\\
&&\quad\lesssim AA_2\|f\|_{L(\log L)^{\beta_2};\,9Q_0}\langle |g|\rangle_{r,\,Q_0}|Q_0|.\nonumber
\end{eqnarray}
It is obvious that
$$T_{1} T_2(f\chi_{9Q_0})(x)\chi_{Q_0}(x)= G_1(x)+G_2(x)+\sum_{l}T_{1 }T_2(\chi_{9P_l})(x)\chi_{P_l}(x).$$

As in \cite{hu3}, we now repeat the argument above with $T_{1}T_2(f\chi_{9Q_0})(x)\chi_{Q_0}$ replaced by each $T_{1}T_2(\chi_{9P_l})(x)\chi_{P_l}(x)$, and so on.
Let $\{Q_{0}^{j_1}\}=\{P_{j}\}$,  and for fixed $j_1,\,\dots,\,j_{m-1}$, $\{Q_{0}^{j_1...j_{m-1}j_m}\}_{j_m}$ be the cubes obtained at the $m$-th stage of the decomposition process to the cube $Q_{0}^{j_1...j_{m-1}}$.
For each fixed $j_1\dots,j_m$, define the functions  $H_{Q_0,1}^{j_1\dots j_m}f$ and  $H_{Q_0,2}^{j_1\dots j_m}f$ by
\begin{eqnarray*}
&&H_{Q_0,1}^{j_1\dots j_m}f(x)=T_{1}\big(\chi_{\mathbb{R}^n\backslash 3Q_0^{j_1\dots j_m}}T_{2}(f\chi_{9Q_0^{j_1\dots j_{m-1}}\backslash 9Q_0^{j_1\dots j_m}})\big)(x)\chi_{Q_0^{j_1\dots j_m}}(x),
\end{eqnarray*}
and
$$H_{Q_0,2}^{j_1\dots j_m}f(x)=T_{1}\Big(\chi_{3Q_{0}^{j_1\dots j_m}}T_2(f\chi_{9Q_{0}^{j_1\dots j_{m-1}}\backslash 9Q_{0}^{j_1\dots j_m}})\big)\Big)(x)\chi_{Q_{0}^{j_1\dots j_m}}(x),
$$ respectively.
Set $\mathcal{F}=\{Q_0\}\cup_{m=1}^{\infty}\cup_{j_1,\dots,j_m}\{Q_{0}^{j_1\dots j_m}\}$. Then $\mathcal{F}\subset \mathcal{D}(Q_0)$ is a $\frac{1}{2}$-sparse  family.  Let
\begin{eqnarray*}&&H_{Q_0,\,1}(x)=T_{1}T_2(f\chi_{9Q_0})\chi_{Q_0\backslash \cup_{j_1}Q_{0}^{j_1}}(x)\\
&&\quad+\sum_{m=1}^{\infty}\sum_{j_1,\dots,j_m}T_{1}T_2(f\chi_{9Q_{0}^{j_1\dots j_m}})\chi_{Q_{0}^{j_1\dots j_m}\backslash \cup_{j_{m+1}}Q_{0}^{j_1\dots j_{m+1}}}(x)\\
&&\quad+\sum_{m=1}^{\infty}\sum_{j_1,\dots, j_m}H_{Q_0,1}^{j_1\dots j_m}f(x)\chi_{Q_{0}^{j_1\dots j_m}}(x).
\end{eqnarray*}
Also, we define the function $H_{Q_0, \,2}$ by
$$H_{Q_0,\,2}(x)=\sum_{m=1}^{\infty}\sum_{j_1\dots j_m}H_{Q_0,2}^{j_1\dots j_m}f(x)\chi_{Q_{0}^{j_1\dots j_m}}(x).$$
Then for a. e. $x\in Q_0$,
$$T_{1} T_2(f\chi_{9Q_0})(x)=H_{Q_0,\,1}(x)+H_{Q_0,\,2}(x).
$$
Moreover, as in the inequalities (\ref{eq3.9}) and (\ref{eq3.11}), the process of producing $\{Q_0^{j_1\dots j_m}\}$ leads to that
\begin{equation}\label{localestimate1}\Big|\int_{\mathbb{R}^d}H_{Q_0,\,1}f(x)\chi_{Q_0}(x)dx\Big|\lesssim (A_0+A_1)\sum_{Q\in\mathcal{F}}\|f\|_{L(\log L)^{\beta_1},\,9Q}\langle |g|\rangle_{r,\,Q}|Q|,\end{equation}
and for the function $g$,
\begin{equation}\label{localestimate2}
\Big|\int_{\mathbb{R}^d}g(x)H_{Q_0,\,2}(x)dx\Big|\lesssim AA_2\sum_{Q\in \mathcal{F}}|Q| \|f\|_{L(\log L)^{\beta_2},\,9Q}\langle|g|\rangle_{r,\,Q}.\end{equation}

We can now conclude the proof of Theorem \ref{dingli4.2}. In fact, as in \cite{ler4}, we decompose $\mathbb{R}^d$ by cubes $\{R_l\}$, such that ${\rm supp}f\subset 3R_l$ for each $l$, and $R_l$'s have disjoint interiors.
Then for a. e. $x\in\mathbb{R}^d$,
\begin{eqnarray*}T_{1 } T_2f(x)=\sum_{l}H_{R_l,\,1}f(x)+ \sum_lH_{R_l,\,2}f(x):= H_1f(x)+H_2f(x).\end{eqnarray*}
Our desired conclusion  follows from inequalities (\ref{localestimate1}) and (\ref{localestimate2}) directly.
\end{proof}
\vskip0.24cm

\section{Proof of Theorems}
Applying Theorem \ref{dingli4.1} to the rough singular integral operators $T_{\Omega_1}$ and $T_{\Omega_2}$, we get the following result.
\begin{corollary}\label{c41} Let $\Omega_1$, $\Omega_2$ be homogeneous of degree zero, have mean value zero and $\Omega_1,\,\Omega_2\in L^{\infty}({S}^{d-1})$. Let $r\in (1,\,3/2]$.
Then for each bounded function $f$ with compact support, there exists a $\frac{1}{2}\frac{1}{9^d}$-sparse family of cubes $\mathcal{S}=\{Q\}$,
and functions $J_1$ and $J_2$, such that for each   function $g$,
$$\Big|\int_{\mathbb{R}^d}J_1(x)g(x)dx\Big|\lesssim r'\mathcal{A}_{\mathcal{S};\,L\log L,\,L^r}(f,\,g),$$
$$\Big|\int_{\mathbb{R}^d}J_2(x)g(x)dx\Big|\lesssim r'^{2}\mathcal{A}_{\mathcal{S};\,L^1,\,L^r}(f,\,g),$$
and for a. e. $x\in\mathbb{R}^d$,
$$T_{\Omega_1} T_{\Omega_2}f(x)= J_1(x)+J_2(x).
$$
\end{corollary}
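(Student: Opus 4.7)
\textbf{Proof plan for Corollary \ref{c41}.} The plan is to apply the abstract decomposition of Theorem \ref{dingli4.2} to $T_1 = T_{\Omega_1}$ and $T_2 = T_{\Omega_2}$ with parameters $\beta_1 = 1$ and $\beta_2 = 0$: once that theorem's three hypotheses are verified with constants $A \lesssim r'$, $A_0 \lesssim 1$, $A_1 \lesssim r'$, and $A_2 \lesssim r'$, the functions $J_1 := H_1$ and $J_2 := H_2$ it produces will inherit exactly the claimed $(L\log L, L^r)$ and $(L^1, L^r)$ bilinear sparse bounds with constants $A_0 + A_1 \lesssim r'$ and $A A_2 \lesssim r'^2$, and the identity $T_{\Omega_1} T_{\Omega_2} f = J_1 + J_2$ a.e.\ is supplied by Theorem \ref{dingli4.2} itself.

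Hypothesis (i), the $L^{r'}$-boundedness of $T_{\Omega_1}$ with norm $\lesssim r'$, is the classical Calder\'on--Zygmund bound for $\Omega_1 \in L^\infty(S^{d-1})$, valid uniformly in $r \in (1, 3/2]$. The second half of hypothesis (iii), the weak-$(1,1)$ estimate for the grand maximal operator $\mathscr{M}_{T_{\Omega_2}, r'}$ with constant $A_2 \lesssim r'$, is a central ingredient of Lerner's sparse-domination scheme \cite{ler4} for $T_\Omega$ and can be taken directly from there.

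The heart of the matter is producing the weak $L\log L$ estimates required by hypothesis (ii) and by the first half of hypothesis (iii); both follow from Theorem \ref{dingli4.1}. For (ii), I would set $U_1 := T_{\Omega_1}$, which is bounded on $L^2(\mathbb{R}^d)$ and of weak type $(1,1)$ by Seeger \cite{se} (so $\beta_1 = 0$ in the notation of Theorem \ref{dingli4.1}), and $U_2 := T_{\Omega_2}$, which enjoys a bilinear $(L^1, L^q)$-sparse domination for $q \in (1, 3/2]$ with constant $\lesssim q'$ (the sparse form bound used in \cite{ler4}), so that the corresponding $\beta_2 = 0$. Theorem \ref{dingli4.1} then delivers a weak $L(\log L)^{1+0+0} = L\log L$ bound for $T_{\Omega_1} T_{\Omega_2}$ with absolute constant, which is (ii) with $A_0 \lesssim 1$. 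For the first half of (iii) one repeats the recipe with $U_1 := \mathscr{M}_{T_{\Omega_1}, r'}$, which is sublinear, $L^2$-bounded with norm $\lesssim r'$, and of weak type $(1,1)$ with constant $\lesssim r'$ (Lerner \cite{ler4}); this yields a weak $L\log L$ bound for $\mathscr{M}_{T_{\Omega_1}, r'} T_{\Omega_2}$ with $A_1 \lesssim r'$.

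The main technical obstacle is the constant bookkeeping inside Theorem \ref{dingli4.1}, whose hypotheses are normalized to unit bounds: a routine rescaling in that theorem's proof is needed to extract the linear dependence of the output $L\log L$ weak bound on the $L^2$-norm and the weak-$(1,1)$ constant of $U_1$ and on the sparse-form constant of $U_2$. Once the four constants $A, A_0, A_1, A_2$ are in hand, invoking Theorem \ref{dingli4.2} immediately produces the required $\frac{1}{2}\frac{1}{9^d}$-sparse family $\mathcal{S}$ and the two sparse bounds $A_0 + A_1 \lesssim r'$ and $A A_2 \lesssim r'^2$ for $J_1$ and $J_2$, completing the proof.
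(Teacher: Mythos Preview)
Your proposal is correct and follows essentially the same route as the paper: verify the hypotheses of Theorem~\ref{dingli4.2} for $T_1=T_{\Omega_1}$, $T_2=T_{\Omega_2}$ with $\beta_1=1$, $\beta_2=0$ by invoking Lerner's weak-$(1,1)$ bound for $\mathscr{M}_{T_{\Omega_2},r'}$ and by deducing the two $L\log L$ weak-type bounds via Theorem~\ref{dingli4.1}. The paper differs only in two cosmetic points: it obtains the $(L^1,L^r)$-sparse domination of $T_{\Omega_2}$ internally (by applying Theorem~\ref{dingli4.2} with $T_1$ the identity, noting $\mathscr{M}_{I,r'}\equiv 0$) rather than citing it, and it explicitly records the $L^2$-bound $\|\mathscr{M}_{T_{\Omega_1},r'}\|_{L^2\to L^2}\lesssim r'$ by interpolating Lerner's weak-$(1,1)$ estimate against a strong bound coming from the pointwise inequality $\mathscr{M}_{T_\Omega,r'}f\le M_{r'}T_\Omega f + Cr'M_{r'}f$---a step you should make explicit, since Lerner's paper gives the weak-$(1,1)$ bound but not the $L^2$ bound directly.
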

\begin{proof} Let $r\in (1,\,3/2]$.
Lerner \cite{ler4} proved that if $\Omega\in L^{\infty}(S^{d-1})$, then
\begin{eqnarray}\label{gongshi4.3}
\|\mathscr{M}_{T_{\Omega},\,r'}f\|_{L^{1,\,\infty}(\mathbb{R}^d)}\lesssim r'\|\Omega\|_{L^{\infty}(S^{d-1})}\|f\|_{L^1(\mathbb{R}^d)}.
\end{eqnarray}
On the other hand, since $T_{\Omega}$ is bounded on $L^{r'}(\mathbb{R}^d)$ with bound $\max\{r,\,r'\}$, we deduce  that
$$\mathscr{M}_{T_{\Omega},\,r'}f(x)\leq M_{r'}T_{\Omega}f(x)+\max\{r,\,r'\}M_{r'}f(x).$$
Therefore,  $\mathscr{M}_{T_{\Omega},\,r'}$ is bounded on $L^{2r}(\mathbb{R}^d)$ with bound $Cr'$. This, via estimate (\ref{gongshi4.3}), leads to that
\begin{eqnarray}\label{gongshi4.4}
\|\mathscr{M}_{T_{\Omega},\,r'}f\|_{L^{2}(\mathbb{R}^d)}\lesssim r'\|f\|_{L^2(\mathbb{R}^d)}.
\end{eqnarray}

We now conclude the proof of Corollary \ref{c41}. Let $I$ be the identity operator. It is obvious that $\mathscr{M}_{I,\,r'}T_{\Omega_2}f(x)=0$. Applying  (\ref{gongshi4.3}) and Theorem \ref{dingli4.2} with $T_1=I$, $T_2=T_{\Omega_2}$, we know that $T_{\Omega_2}$ satisfies a $(L^1,\,L^r)$-bilinear sparse domination with bound $r'$.
Thus by Theorem \ref{dingli4.1} with the fact $T_{\Omega_1}$ is of weak type (1,1) (see e.g. \cite{se}), we have that for any $\lambda>0$,
$$|\{x\in\mathbb{R}^d:\,|T_{\Omega_1}T_{\Omega_2}f(x)|>\lambda\}|\lesssim\int_{\mathbb{R}^d}\frac{|f(x)|}{\lambda}\log \Big({\rm e}+\frac{|f(x)|}{\lambda}\Big)dx.$$ Furthermore, it follows from Theorem \ref{dingli4.1}, (\ref{gongshi4.3}) and (\ref{gongshi4.4}) that for any $\lambda>0$,
$$|\{x\in\mathbb{R}^d:\,\mathscr{M}_{T_{\Omega_1},\,r'}T_{\Omega_2}f(x)>\lambda\}|\lesssim\int_{\mathbb{R}^d}\frac{r'|f(x)|}{\lambda}\log \Big({\rm e}+\frac{r'|f(x)|}{\lambda}\Big)dx.$$
Recall that $T_{\Omega_1}$ is bounded on $L^r(\mathbb{R}^d)$ with bound $Cr'$. Another application of Theorem \ref{dingli4.2} yields desired conclusion.
\end{proof}

{\it \textbf{Proof of Theorem \ref{thm1.1}}}.  For $p\in (1,\,\infty)$ and $w\in A_{p}(\mathbb{R}^d)$, let $\tau_{w}=2^{11+d}[w]_{A_{\infty}}$ and $\tau_{\sigma}=2^{11+d}[\sigma]_{A_{\infty}}$, $\varepsilon_1=\frac{p-1}{2p\tau_{\sigma}+1}$, and $\varepsilon_2=\frac{p'-1}{2p'\tau_w+1}$. It was proved in \cite{hu1} that \begin{eqnarray}\label{eq3.4}&&\ \mathcal{A}_{\mathcal{S};L^{1+\varepsilon_1},\,L^{1+\varepsilon_2}}(f,\,g)\lesssim [w]_{A_p}^{1/p}([\sigma]_{A_{\infty}}^{1/p}+[w]_{A_{\infty}}^{1/p'})\|f\|_{L^p(\mathbb{R}^d,\,w)}
\|g\|_{L^{p'}(\mathbb{R}^d,\sigma)}.\end{eqnarray}
Note that
$$\mathcal{A}_{\mathcal{S};L\log L,\,L^{1+\varepsilon_2}}(f,\,g)\lesssim \frac{1}{\varepsilon_1}\mathcal{A}_{\mathcal{S};L^{1+\varepsilon_1},\,L^{1+\varepsilon_2}}(f,\,g).$$
Invoking Corollary \ref{c41}   and inequality (\ref{eq3.4}), we deduce that for bounded functions $f$ and $g$,
\begin{eqnarray*}
\Big|\int_{\mathbb{R}^d}g(x) T_{\Omega_1}T_{\Omega_2}f(x)dx\Big|&\lesssim & [w]_{A_p}^{1/p}([\sigma]_{A_{\infty}}^{1/p}+[w]_{A_{\infty}}^{1/p'})[w]_{A_{\infty}}\\
&&\times\big([w]_{A_{\infty}}+[\sigma]_{A_{\infty}}
\big)\|f\|_{L^p(\mathbb{R}^d,\,w)}
\|g\|_{L^{p'}(\mathbb{R}^d,\sigma)}.\nonumber
\end{eqnarray*}
Recall that $w\in A_p(\mathbb{R}^n)$ implies that $\sigma\in A_{p'}(\mathbb{R}^n)$ and $[\sigma]_{A_{p'}}^{1/p'}=[w]_{A_p}^{1/p}$. Applying  Corollary \ref{c41}  to $T_{\Omega_2}T_{\Omega_1}$, we obtain that
\begin{eqnarray*}
\Big|\int_{\mathbb{R}^d}f(x) T_{\Omega_2}T_{\Omega_1}g(x)dx\Big|&\lesssim &[\sigma]_{A_{p'}}^{1/p'}([\sigma]_{A_{\infty}}^{1/p}+[w]_{A_{\infty}}^{1/p'})[\sigma]_{A_{\infty}}\\
&&\times\big([w]_{A_{\infty}}+[\sigma]_{A_{\infty}}
\big)\|f\|_{L^p(\mathbb{R}^d,\,w)}
\|g\|_{L^{p'}(\mathbb{R}^d,\sigma)}\\
&\lesssim &[w]_{A_{p}}^{1/p}([\sigma]_{A_{\infty}}^{1/p}+[w]_{A_{\infty}}^{1/p'})[\sigma]_{A_{\infty}}\\
&&\times\big([w]_{A_{\infty}}+[\sigma]_{A_{\infty}}
\big)\|f\|_{L^p(\mathbb{R}^d,\,w)}
\|g\|_{L^{p'}(\mathbb{R}^d,\sigma)}.
\end{eqnarray*}
Combining the last two inequalities yields desired conclusion.
\qed

\medskip

{\it \textbf{Proof of Theorem \ref{thm1.2}}}. Let $w\in A_1(\mathbb{R}^d)$. We obtain from Corollary \ref{c41} and Corollary \ref{cor1}  that
\begin{eqnarray*}\label{equation3.12}
&&w\big(\{x\in\mathbb{R}^d:\, |T_{\Omega_1}T_{\Omega_2}f(x)|>\lambda\}\big)\\
&&\quad\leq w\big(\{x\in\mathbb{R}^d:\, |J_1(x)|>\lambda/2\}\big)+u\big(\{x\in\mathbb{R}^d:\, |J_2(x)|>\lambda/2\}\big)\nonumber\\
&&\quad\lesssim [w]_{A_{\infty}}\log^{2}({\rm e}+[w]_{A_{\infty}})[w]_{A_1}\int_{\mathbb{R}^d}\frac{|f(x)|}{\lambda}\log \Big({\rm e}+\frac{|f(x)|}{\lambda}\Big)u(x)dx\nonumber\\
&&\qquad+[w]_{A_{\infty}}^2\log ({\rm e}+[w]_{A_{\infty}})[w]_{A_1}\int_{\mathbb{R}^n}\frac{|f(x)|}{\lambda}w(x)dx\nonumber\\
&&\quad\lesssim [w]_{A_{\infty}}^2\log ({\rm e}+[w]_{A_{\infty}})[w]_{A_1}\int_{\mathbb{R}^d}\frac{|f(x)|}{\lambda}\log \Big({\rm e}+\frac{|f(x)|}{\lambda}\Big)w(x)dx,\nonumber
\end{eqnarray*}
with $J_1$ and $J_2$ the functions defined in Corollary \ref{c41}. This completes the proof of Theorem \ref{thm1.2}.
\qed

\vskip0.24cm

{\bf Added in Proof}. After this paper was prepared, we learned that Li et al. \cite{lpr} established the weighted bounds for linear operators satisfying  the assumptions in Corollary \ref{cor1} with $\beta=0$, which coincides the conclusion  in Corollary \ref{cor1} for $\beta=0$. The argument in \cite{lpr} is different from the argument in the proof of Corollary \ref{cor1} and is of independent interest.

The authors would like to thank Dr. Kangwei Li for his helpful comments and suggestions.
\vskip0.24cm

\bibliographystyle{amsplain}

\begin{thebibliography}{99}
\bibitem{bb} C. Benea and F. Bernicot, Conservation de certaines propri\'et\'es \'a travers un contr\^ole \'epars d'un op\'erateur et applications au projecteur de Leray-Hopf, arXiv: 1703:00228.

\bibitem{Cal67}A.P. Calder\'on, Algebras of singular integral operators. Singular integrals (Proc. Sympos. Pure Math., Chicago, Ill., 1966), pp. 18-55. Amer. Math. Soc., Providence, R.I., 1967.

\bibitem{Cal68}A.P. Calder\'on,  Algebras of singular integral operators. 1968 Proc. Internat. Congr. Math. (Moscow, 1966) pp. 393-395 Izdat. "Mir'', Moscow.

\bibitem{cz1}A. P. Calder\'on and A. Zygmund, On the existence of certain singular integrals, Acta Math. \textbf{88} (1952), 85-139.

\bibitem{cz2} A. P. Calder\'on and A. Zygmund, On  singular integrals, Amer. J. Math. \textbf{78} (1956), 289-309.

\bibitem{CZ56}A.P. Calder\'on and A. Zygmund,  Algebras of certain singular operators. Amer. J. Math. 78 (1956), 310-320.

\bibitem{cana}N. Carozza and A. Passarelli di Napoli, Composition of maximal operators, Publ. Mat. \textbf{40} (1996), 397-409.

\bibitem{ccdo} J. M. Conde-Alonso, A. Culiuc, F. Di Plinio and Y. Ou, A sparse domination principle for rough singular integrals, Anal. PDE. \textbf{10} (2017), 1255-1284.

\bibitem{Chr88}M. Christ, Inversion in some algebras of singular integral operators. Rev. Mat. Iberoamericana \textbf{4} (1988), no. 2, 219-225.

\bibitem{chr2} M. Christ and J.-L. Rubio de Francia, Weak type (1,\,1) bounds for rough operators, II, Invent. Math. \textbf{93} (1988), 225-237.

\bibitem{co} W. C. Connett,  Singular integrals near $L^1$, Proc. Sympos. Pure Math. of Amer. Math. Soc., (S. Wainger and G. Weiss eds), Vol 35 I(1979), 163-165.


\bibitem{dhl} F. Di Plinio, T. Hyt\"onen and K. Li, Sparse bounds for maximal rough singular integrals via the Fourier transform, arXiv:1706.09064.



\bibitem{fp} D. Fan and Y. Pan, Singular integral operators with rough kernels supported by subvarieties, Amer. J. Math. \textbf{119} (1997), 799-839.

\bibitem {Gra249}L. Grafakos, \textit{Classic Fourier Analysis}, Graduate Texts in Mathematics, Vol. \textbf{249} (Third edition), Springer, New York, 2014.

\bibitem {Gra250}L. Grafakos, \textit{Modern Fourier Analysis}, Graduate Texts in Mathematics, Vol. \textbf{250} (Third edition), Springer, New York, 2014..

\bibitem{gs} L. Grafakos and A. Stefanov,  $L^p$ bounds for singular integrals and maximal singular integrals with rough kernels, Indiana Univ. Math. J. \textbf{47}(1998), 455-469.


\bibitem{hu} G. Hu, Weighted vector-valued estimates for a  non-standard Calder\'on-Zygmund operator, Nonlinear Anal. \textbf{165} (2017), 143-162.

\bibitem{hu1} G. Hu, Quantitative weighted bounds for the composition of Calder\'on-Zygmund operators, Banach J. Math. Anal. \textbf{13} (2019), 133-150.

\bibitem{hu3} G. Hu, Weighted weak type endpoint estimates for the composition of Calder\'on-Zygmund operators, J. Aust. Math. Soc,  https://doi.org/10.1017/S1446788719000107.

\bibitem{huli}G. Hu and D. Li, A Cotlar type inequality for the multilinear singular integral operators and its applications, J. Math. Anal. Appl. \textbf{290} (2004), 639-653.

\bibitem{hlp} T. Hyt\"onen, M. T. Lacey and C. P\'erez, Sharp weighted bounds for the $q$-variation of singular integrals, Bull. Lond. Math. Soc. \textbf{45} (2013), 529-540.

\bibitem{hp} T.  Hyt\"onen and C. P\'erez, Sharp weighted bounds involving $A_1$, Anal. PDE. \textbf{6} (2013), 777-818.

\bibitem{hp2} T.  Hyt\"onen and C. P\'erez, The $L(\log L)^{\epsilon}$ endpoint estimate for maximal singular integral operators, J. Math. Anal. Appl. \textbf{428} (2015), 605-626.

\bibitem{hrt}T. Hyt\"onen, L. Roncal, and O. Tapiola, Quantitative weighted estimates for rough homogeneous singular integrals,  Israel J. Math. \textbf{218} (2017), 133-164.


\bibitem{ler4} A. K. Lerner, A weak type estimates for rough singular integrals, Rev. Mat. Iberoam., to appear, available at arXiv: 1705:07397.

\bibitem{ler5} A. K. Lerner, A note on weighted bounds for rough singular integrals, Comptes Rend. Math. \textbf{356} (2018), 77-80.

\bibitem{ler6}A. K. Lerner, S. Ombrosi, and C. P\'erez,$A_1$ bounds for Calder\'on-Zygmund operators related to a problem of Muckenhoupt and Wheeden, Math. Res. Lett. \textbf{16} (2009), 149-156.

\bibitem{NRSW15}A. Nagel, F. Ricci, E. M. Stein, S. Wainger, Algebras of singular integral operators with kernels controlled by multiple norms. Mem. Amer. Math. Soc. 256 (2018), no. 1230, vii+141 pp.


\bibitem{lpr} K. Li, C. P\'erez, Isreal P. Rivera-Rios and L. Roncal, Weighted norm inequalities for rough singular integral operators,
J. Geom. Anal. https://doi.org/10.1007/s12220-018-0085-4.

\bibitem {ober}R. Oberlin, Estimates for compositions of maximal operators with singular integrals, Canad. Math. Bull. \textbf{56} (2013), 801-813.

\bibitem{phst}D. H. Phone and E. M. Stein, Some further classes of pseudo-differential and singular integral operators arising in boundary-value problems I,  composition of operators, Amer. J. Math. \textbf{104} (1982), 141-172.

\bibitem{rw}F. Ricci and G. Weiss,  A characterization of $H^1(S^{n-1})$, Proc. Sympos. Pure Math. of Amer. Math. Soc., (S. Wainger and G. Weiss eds), Vol 35 I(1979), 289-294.

\bibitem{se} A. Seeger,  Singular integral operators with rough convolution kernels, J. Amer. Math. Soc. \textbf{9} (1996), 95-105.

\bibitem{str} R. S. Strichartz, Compositions of singular integral operators, J. Funct. Anal. \textbf{49} (1982), 91-127.


\bibitem{wil}M. J. Wilson, Weighted inequalities for the dyadic square function without dyadic $A_{\infty}$, Duke Math. J. \textbf{55} (1987), 19-50.

\end{thebibliography}

\end{document}